\documentclass[a4paper,10pt]{amsart}
\usepackage[foot]{amsaddr}
\usepackage[margin=2.7cm]{geometry}
\newtheorem{theorem}{Theorem}[section]
\newtheorem{lemma}[theorem]{Lemma}
\theoremstyle{remark}
\newtheorem{remark}[theorem]{Remark}
\usepackage{systeme}
\usepackage{graphicx}
\usepackage[]{units}
\usepackage{color}
\usepackage{mathrsfs}
\usepackage{bm}
\usepackage[ansinew]{inputenc}
\usepackage[breaklinks]{hyperref}
\usepackage{float}
\usepackage{amsmath}
\usepackage{isomath}
\usepackage{amsthm}
\usepackage{amssymb}

\usepackage{algorithm,algorithmic}
\usepackage{array}
\usepackage{booktabs}
\usepackage{multicol}
\usepackage{multirow}
\usepackage{tabularx}
\usepackage{subfigure}
\usepackage{cite}
\usepackage{soul}


\usepackage{hyperref} 

\theoremstyle{definition}

\newtheorem{example}[theorem]{Example}
\numberwithin{equation}{section}


\newcommand{\vertiii}[1]{{\left\vert\kern-0.25ex\left\vert\kern-0.25ex\left\vert #1 
\right\vert\kern-0.25ex\right\vert\kern-0.25ex\right\vert}}


\newcommand{\red}{\color{black}}










\begin{document}

\title[]{Random geometries {\red for} optimal control PDE problems based on fictitious domain FEMS and cut elements.
}

\author{Aikaterini Aretaki\textsuperscript{1}}
\address{\textsuperscript{1}Department of Mathematics, National Technical University of Athens, Greece}
\thanks{}
\email{kathy@mail.ntua.gr}

\author{Efthymios N. Karatzas\textsuperscript{1,}\textsuperscript{2,}\textsuperscript{3}}
\address{\textsuperscript{2}FORTH Institute of Applied and Computational Mathematics, Heraclion, Crete, Greece}
\address{\textsuperscript{3}SISSA (Affiliation), International School for Advanced Studies, Mathematics Area, mathLab Trieste, Italy}

\email{karmakis@math.ntua.gr \& efthymios.karatzas@sissa.it}


\subjclass[2000]{Primary 65C05, 65N55, 65N85, 49J20, 35Q93, 35R60}

\keywords{Optimal control, 
cut finite element method, 
Quasi Monte Carlo method, 
random geometries, 
error estimates.}

\date{\today}

\dedicatory{}

\begin{abstract}
This work investigates an elliptic optimal control problem defined on uncertain domains and discretized by a fictitious domain finite element method and cut elements. Key ingredients of the study are to manage cases  considering the usually computationally ``forbidden"  combination of poorly conditioned equation system matrices due to challenging geometries, optimal control searches with iterative methods,  
slow convergence to system solutions on deterministic and non--deterministic level, and expensive remeshing due to geometrical changes. We overcome all these difficulties, utilizing the advantages of proper preconditioners adapted to unfitted mesh methods, improved types of Monte Carlo methods, and mainly employing the advantages of embedded FEMs,  based on a fixed background mesh computed once even if geometrical changes are taking place. The sensitivity of the control problem is introduced in terms of random domains, employing a Quasi Monte Carlo method and emphasizing on a  deterministic target state. The variational discretization concept is adopted, optimal error estimates for the state, adjoint state and control are derived that confirm the efficiency of the cut finite element method in challenging geometries. The performance of a multigrid scheme especially developed for unfitted finite element discretizations adapted to the optimal control problem is also tested. Some fundamental preconditioners are applied to the arising sparse linear systems coming from the discretization of the state and adjoint state variational forms in the spatial domain.  The corresponding convergence rates along with the quality of the prescribed preconditioners are verified by numerical examples.  
\end{abstract}
\maketitle
\section{Introduction}
Embedded and immersed methods have a long history, dating back to the pioneering work of Peskin \cite{P1972}. Several improved variants can be found in the recent literature, including methods as the ghost--cell finite difference method \cite{WFC13}, cut--cell volume method \cite{PHO16}, immersed interface \cite{KB18}, ghost fluid \cite{BG14},  shifted boundary methods \cite{MS18}, $\phi$--FEM \cite{DL19}, and CutFEM \cite{BCHLM2014,BH2010,BH2012,HH2002,L19}, among others. For a comprehensive overview of this research area, the interested reader is referred to the review paper \cite{MI05}. The considerable impetus for such widespread investigations has been provided by many applications of interest, which involve general domains, e.g., in the context of fluid--structure interaction or reduced--order models for parameter--dependent domains \cite{KBR2019,KSASR2019,KSNSRa2019,KSNSRb2019} etc. In the aforementioned cases, immersed and embedded methods compare favorably to standard FEM, providing simple and efficient schemes for the numerical approximation of PDEs.

Nonetheless, the effectiveness of these methods for achieving optimal control in PDEs has not been suitably assessed. {Approaches on interface--unfitted finite elements methods for elliptic optimal control problems have been explored in \cite{WYX2019,YWX2018} and references therein. The necessity of optimal control in PDEs is ubiquitous throughout applied sciences and engineering and extensive literature on the analysis of such problems is readily available, see, for example, the classic books \cite{HPUU2008,L1971,T2010}. A class of semilinear and nonlinear elliptic optimization problems has been studied in \cite{CM2002,CT2002,Tr}, whilst some parabolic optimization problems have been surveyed in \cite{CK2014,KS2013}.}		

In many applications like optimal control for fluid flows or shape optimization, a number of physical properties, geometrical variables or material parameters are often not precisely known, which challenges input data under uncertainties, \cite{BDOS2016,BT2018,BT2019,BT,BWB2019,BLSGUR2018}. { For instance, it is important in certain cases to estimate the shape and the location of a system for the production policies to be optimized, see e.g. the boundary of an oil reservoir, the internal combustion engine and the geometry of the exhaust system which need to be optimized so as to maximize the power output of the engine. Generally, random geometries characterize materials in micro-structure and macroscopic level, mechanics of deformable bodies etc.} 
Other sources of geometric uncertainty may come from manufacturing tolerances which leads engineering analysis to a more robust design, {for instance,} the thermostat housing pipe optimal control problem, also, the distribution of gas within a pipeline may be random, since a pipeline operator, typically does not know in advance whether a power plant will come online and for how long. In all such situations, uncertainty in modeling parameters, in geometry, in initial conditions or in spatially varying material properties induces uncertainty in the outputs of the model and in any quantities of interest derived from these outputs.  For all aforementioned in this paragraph, see for example \cite{CCJ2017,CR2003,Chiu13,DDG2002,Tor2002}, \cite[pp 203-207]{tuned_exhaust77}, and references therein. 

A common way to deal with these uncertainties is to model the input data as random fields or parameter--dependent functions, { \cite{APSG2017,BOS2015,GWZ2012,GWZ2014,GM2013}}. Consequently, the derived quantities of interest are also random variables or random fields. In this case, we consider optimal control problems constrained by PDEs with random or parameterized coefficients,  \cite{GLL2011,KS2016,K1990} and/or parameterized geometries implicitly defined through random level-set functions. For an overview of fictitious domain approaches on random geometries, optimization under uncertainties and optimization under uncertainties using stochastic gradient one could be referenced to \cite{BV2019,CK2007,MKN2018}. We also refer to \cite{AUH2017} and \cite{TKXP2012}, in which a stochastic optimal control problem is elaborated with uncertainty placed in the diffusion coefficient of the elliptic PDE. The authors assume the control variable to be a random field and indicate that the stochasticity is transferred to the descent direction of the gradient method through the derivative of the cost functional, consequently the implementation of the gradient descent is more involved. On the other hand, they mention that if a deterministic control is used, the stochasticity of the problem reduces in the computation of the descent direction through the expectation of the adjoint state {\it which is built in the background mesh}. Thus the calculations are simplified due to the involvement of the expectation in the weak formulation. However, reducing the control to a deterministic variable is useful for more practical engineering applications. Although, we examine the most difficult case of non-deterministic control, as introduced in the aforementioned papers, but we place the uncertainty in the geometry of the problem.

The computational goal is to find the expected value or other statistics of the quantities generating several realizations of the random field. For each fixed realization we solve the optimal control problem numerically and we compute the quantity of interest. This procedure describes the standard Monte Carlo (MC) simulation regularly employed in such problems, \cite{F1996,LZ2004,YK1991}. 
In the present work, motivated by sensitivity analysis applied to optimal control problems on random geometries and the large amount of computer resources we need, we improve upon the slow convergence of MC simulation considering a Quasi Monte Carlo (QMC) method, which is a more effective quadrature method for tackling high dimensional stochastic integrals, \cite{AUH2017,BHP2019,DKS2013,GKKSS2019,JGTW2015,KSS2012}. 

Moreover, we investigate the performance of the cut finite element method on PDE--constrained optimization, employing the advantage of a fixed background mesh independent from the geometry variations. In this way, we avoid the mesh construction ``bottleneck" whenever the geometry changes, and we may efficiently tackle (together with a QMC method),  the iterative procedure of the optimal control search for the predescribed random data. The CutFEM method is defined in a fictitious domain setting, enforcing Nitsche--type weak boundary conditions, as well as employing an additional ghost penalty term on the interface zone element faces, \cite{BCHLM2014,BH2010,BH2012}. The presence of the latter term ensures stability and also that the resulting discretized linear system is well--conditioned, independently of the position of the physical domain concerning the fixed background mesh. We also consider a model optimal control problem on an elliptic PDE with quadratic cost functional and distributed control. The Poisson equation is examined as archetypal state constraint. Optimal error estimates for the state, control and adjoint variable are discussed. Numerical tests verify the theoretical results and illustrate the efficiency of the proposed procedure.  From the discrete optimality conditions point of view results, a numerical resolution of the linear system comes after an application of a Krylov subspace iterative method, \cite{HS2010,S2003}. 

The properties of symmetry and positive definiteness of the stiffness matrices arising from the successive discretization of the state and adjoint variational forms are inherited from the cut finite element method and enable us to use a conjugate gradient algorithm together with some fundamental preconditioners to achieve faster convergence behavior, and improved matrix conditioning. Apart from testing the classical Jacobi and the symmetrized Gauss-Seidel preconditioners \cite{LR2017,RDW2010}, a multigrid method as it is introduced in \cite{LGR2018} and adapted to unfitted mesh methods is also applied. The development of this multilevel scheme is based on a prolongation operator for unfitted finite element discretizations and also on the construction of a local correction smoother to deteriorate the effect of large jumps on the boundary zone. 

The analysis carried out in this paper is organized as follows. 
In section \ref{section2}, we formulate the distributed control problem governed by a mixed Dirichlet-Neumann boundary value Poisson problem with random input data. We derive the optimality system, which contains the state equation, the adjoint state equation, as well as, the optimality condition. Afterwards, the regularity of the state and adjoint variables is deduced from the first-order optimality conditions.
Section \ref{section3} is devoted to the discretization of the optimal control problem via the fictitious domain method with cut elements. 
In section \ref{section4}, the variational discretization approach is outlined and a--priori error estimates for the state, adjoint state and control are derived. In section \ref{section5}, we briefly describe standard MC and QMC estimators for the statistics of the random geometry optimal control problem. Section \ref{section6} outlines the basic features of the investigated preconditioners and finally, in section \ref{section7} the accuracy of the cut elements method is verified and the quality of the preconditioners is tested by some deterministic numerical examples. A  numerical example is also provided to investigate QMC convergence on a random geometry.

\section{Problem formulation}
\label{section2}
Let $(\Omega, \mathcal{F}, P)$ denote a complete probability space, where $\Omega$ is a space of samples $\omega\in\Omega$,
$\mathcal{F}$ is a $\sigma$-algebra of events and $P: \mathcal{F}\to [0, 1]$, $P(\Omega)=1$, is a probability measure.
We consider a quadratic--linear optimal control problem constrained by an uncertain geometries elliptic PDE: 
minimize the cost functional
\begin{equation}\label{cost}
\mathcal{J}(y,u):= {\frac{1}{2}\int_\mathcal{D}|y(\omega) - y_d|^2 dx + \frac{\alpha}{2}\int_\mathcal{D} |u(\omega)|^2 dx}
\end{equation}
over all $(y(\omega),u(\omega))\in H^1(\mathcal{D}(\omega))\times L^2(\mathcal{D}(\omega))$, subject to the mixed boundary value problem 
\begin{eqnarray}\label{mixedBVP}
\nonumber - \Delta y(\omega) & = & {f} + u(\omega) \quad \textrm{in}\quad \mathcal{D}(\omega), \\
y(\omega) & = & {g_D}  \quad  \,\,\,\textrm{on}\quad \Gamma_D(\omega),\\
\nonumber \mathbf{n}_\Gamma\cdot\nabla y(\omega) & = & {g_N} \quad  \,\,\,\textrm{on}\quad \Gamma_N(\omega),
\end{eqnarray}
{where $f$ is a deterministic force field and $g_D, g_N$ are deterministic boundary data; the operator $\Delta$ involves only  derivatives with respect to the spatial variable $x\in\mathcal{D}(\omega)$. To have a well-posed problem, we assume $f, g_D$ and $g_N$ to be defined on an hold-all domain, namely on the background domain (see for instance \cite{HPS2016}). Of course, the results can be extended to uncertain data $f$, $g_D$ and $g_N$.}

Here, $\mathcal{D}(\omega)\subset\mathbb{R}^2$ is a bounded domain for any outcome $\omega\in\Omega$ and ${ \bf n_{\Gamma}}$ denotes the outward pointing unit normal vector to its smooth or convex polygonal 
boundary $\Gamma(\omega)=\partial\mathcal{D}(\omega)=\Gamma_D(\omega)\cup \Gamma_N(\omega)$, decomposed in the disjoint parts $\Gamma_D(\omega)$, $\Gamma_N(\omega)$, where  Dirichlet and Neumann boundary conditions respectively are  applied.
The objective functional in (\ref{cost}) is defined in terms of the Tikhonov regularization parameter $\alpha>0$ and a given {deterministic, again defined in a hold-all domain,} target state ${y_d} \in L^2(\mathcal{D}(\omega))$, is to be achieved through the action of the random field $u(\omega)$ representing a distributed control.

\subsection{Weak form}
Throughout the article, we adopt the notation  {$(v,w)_{\mathcal{X}}=\int_\mathcal{X} vw \,d\mathcal{X}$} 
for the usual $L^2( {\mathcal{X}})$-inner product on $ {\mathcal{X}}\subset\mathbb{R}^n$ ($n=1, 2, 3$), 
with induced norm  {$\|v\|_{ {\mathcal{X}}}=( v,v)_{\mathcal{X}}^{1/2}=\left(\int_{ {\mathcal{X}}}|v|^2 d\mathcal{X}\right)^{1/2}$} and  {$\|v\|_{\partial \mathcal{X}}=\langle v,v \rangle_{\partial \mathcal{X}}^{1/2}=\left(\int_{\partial \mathcal{X}}|v|^2 d\mathcal{X}\right)^{1/2}$}. Let also  {$\|\cdot\|_{s,\mathcal{X}}$} and  {$|\cdot|_{s,\mathcal{X}}$} denote the standard  {$H^s(\mathcal{X})$} ($s\in\mathbb{R}$) Sobolev space norm and semi-norm, respectively. Notice that  {$\|\cdot\|_{0,\mathcal{X}}=\|\cdot\|_{\mathcal{X}}$}.
Defining the solution and test spaces  as  
$V_{g_D}(\omega) = \Big\{ w\in H^1(\mathcal{D}(\omega)):\left. w \right|_{\Gamma_D(\omega)} = {g_D}\Big\}$ and 
$V_0(\omega) = \Big\{ w\in H^1(\mathcal{D}(\omega)):\left. w \right|_{\Gamma_D(\omega)} = 0 \Big\}$ respectively, the weak formulation of  (\ref{mixedBVP}) reads: find $y\in V_{g_D}(\omega)$ such that for almost every $\omega\in\Omega$ 
\begin{equation}\label{variational}
a(y(\omega),v(\omega)) = ( { f}+u(\omega), v(\omega) )_{\mathcal{D}(\omega)} + \langle { g_N}, v(\omega)\rangle_{\Gamma_N(\omega)}, 
\,\,\,\forall \,\,\, v(\omega) \in V_0(\omega),
\end{equation}
with bilinear form $a(y(\omega),v(\omega)):= ( \nabla y(\omega),\nabla v(\omega) )_{\mathcal{D}(\omega)}$. 
In order to study the sensitivity of the control problem (\ref{cost})-(\ref{variational}) and obtain a robust control with respect to random fluctuations in the parameterized domain $\mathcal{D}(\omega)$, we consider the function space
\[
V_{P,g}=\left\{ y: \omega\in \Omega \to y(\cdot,\omega)\in V_{g_{D}}(\omega); \int_{\Omega}\|y\|_{V_{g_{D}}(\omega)}^2 dP(\omega)<\infty \right\}
\]
and we seek $y(\omega)\in V_{P,g}$ such that $\forall\,\, v(\omega)\in V_{P,0}$
\begin{equation}\label{SOCP}
\int_{\mathcal{D}}\mathbb{E}\left[\nabla y(\omega)\cdot\nabla v(\omega)\right]dx  = 
\int_{\mathcal{D}}\mathbb{E}\left[({ f} + u(\omega))v(\omega) \right]dx  + 
\int_{\Gamma_N}\mathbb{E}\left[ { g_N} v(\omega) \right]dx.
\end{equation}
\subsection{Optimality system}
A random field can be expressed as a function
of random variables.  This fact enables us to formulate the optimal control problem under uncertain geometries for  the  cost  functional (\ref{cost}) and  the  constraints in (\ref{mixedBVP}) as a parametric PDE-constrained optimization problem. Hence
for a fixed parameter $\omega\in\Omega$, the optimal control problem inherits directly the properties from its deterministic infinite dimensional counterpart. Due to the convexity of the functional and the linearity of the state equation, the problem (\ref{cost})-(\ref{variational}) is amenable to a well established existence and uniqueness theory \cite{AUH2017,GKKSS2019, L1971, T2010}. Its solution is uniquely characterized by the following first-order necessary and sufficient optimality conditions for a fixed outcome $\omega\in\Omega$.

\begin{theorem}\label{continuous}
The problem (\ref{cost}),(\ref{variational}) admits a unique optimal control $\bar{u}(\omega)\in L^2(\mathcal{D}(\omega))$,
with an associated state $\bar{y}(\omega)\in H^1(\mathcal{D}(\omega))$ and an adjoint state 
$\bar{p}(\omega)\in H^1(\mathcal{D}(\omega))$ that satisfy the optimality conditions
\begin{eqnarray}\label{OCP}
a(\bar{y}(\omega), v(\omega)) & = & ( { f}+\bar{u}(\omega),v(\omega) )_{\mathcal{D}(\omega)} +  
\langle { g_N},v(\omega)\rangle_{\Gamma_N(\omega)}, 
\label{OCPa} \\ 
a(v(\omega), \bar{p}(\omega)) & = & ( \bar{y}(\omega) - { y_d}, v(\omega) )_{\mathcal{D}(\omega)}, 
\label{OCPb} \\ 
\alpha \bar{u}(\omega) + \bar{p}(\omega)  & =& 0, \label{OCPc}
\end{eqnarray}
for any $v(\omega)\in V_0(\omega)$.
Moreover, the adjoint equation (\ref{OCPb}) is the weak formulation of the 
homogeneous mixed boundary value problem
\begin{eqnarray}\label{adjointBVP}
\nonumber - \Delta \bar{p}(\omega) & = & \bar{y}(\omega) - { y_d} \quad \,\,\,\textrm{in}\quad \mathcal{D}(\omega), \\
\bar{p}(\omega) & = & 0 \quad \quad \,\,\,\textrm{on}\quad \Gamma_D(\omega),\\
\nonumber \mathbf{n}_\Gamma\cdot\nabla \bar{p}(\omega) & = & 0 \quad \quad \,\,\,\textrm{on}\quad \Gamma_N(\omega).
\end{eqnarray}
\end{theorem}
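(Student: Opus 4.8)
The plan is to read Theorem~\ref{continuous} as the classical deterministic theory applied pointwise at a fixed outcome $\omega\in\Omega$, which I suppress from the notation, writing $\mathcal{D}$, $\Gamma_D$, $\Gamma_N$, $V_{g_D}$, $V_0$. First I would reduce the state problem (\ref{variational}) to homogeneous Dirichlet data: fix a bounded lift $w_D\in H^1(\mathcal{D})$ with $w_D|_{\Gamma_D}=g_D$ and write $y=w_D+y_0$ with $y_0\in V_0$. The bilinear form $a(\cdot,\cdot)=(\nabla\cdot,\nabla\cdot)_{\mathcal{D}}$ is bounded on $H^1(\mathcal{D})$ and coercive on $V_0$, the latter because $\Gamma_D$ carries positive surface measure, which yields a Poincar\'e--Friedrichs inequality $\|v\|_{1,\mathcal{D}}\le C\,|v|_{1,\mathcal{D}}$ for $v\in V_0$. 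By Lax--Milgram, for every $u\in L^2(\mathcal{D})$ there is a unique state $y=S(u)\in V_{g_D}$ solving (\ref{variational}); the control-to-state operator $S\colon L^2(\mathcal{D})\to H^1(\mathcal{D})$ is affine and Lipschitz, with bounded linear part $S_0\colon h\mapsto S(h)-S(0)$ characterized by $a(S_0h,w)=(h,w)_{\mathcal{D}}$ for all $w\in V_0$.

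Next I would pass to the reduced functional $j(u):=\mathcal{J}(S(u),u)$ and apply the direct method of the calculus of variations. Since $S$ is affine, $u\mapsto\tfrac12\|S(u)-y_d\|_{\mathcal{D}}^2$ is convex and continuous on $L^2(\mathcal{D})$, and the Tikhonov term $\tfrac{\alpha}{2}\|u\|_{\mathcal{D}}^2$ with $\alpha>0$ renders $j$ strictly convex, coercive and weakly lower semicontinuous; hence $j$ has a unique minimizer $\bar u\in L^2(\mathcal{D})$ with associated state $\bar y=S(\bar u)\in H^1(\mathcal{D})$, which is precisely (\ref{OCPa}). To obtain the optimality condition, I would introduce the adjoint state $\bar p\in V_0$ as the (again unique, by Lax--Milgram) solution of $a(v,\bar p)=(\bar y-y_d,v)_{\mathcal{D}}$ for all $v\in V_0$, i.e.\ (\ref{OCPb}), and compute the gradient of $j$ by the chain rule, $j'(\bar u)h=(\bar y-y_d,S_0h)_{\mathcal{D}}+\alpha(\bar u,h)_{\mathcal{D}}$. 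Using (\ref{OCPb}) with $v=S_0h$ together with the identity $a(S_0h,\bar p)=(h,\bar p)_{\mathcal{D}}$ (the characterization of $S_0$ with $w=\bar p$) gives $(\bar y-y_d,S_0h)_{\mathcal{D}}=a(S_0h,\bar p)=(\bar p,h)_{\mathcal{D}}$, so $j'(\bar u)h=(\bar p+\alpha\bar u,h)_{\mathcal{D}}$. Since the control is unconstrained, $j'(\bar u)=0$ is equivalent to $\alpha\bar u+\bar p=0$ in $L^2(\mathcal{D})$, which is (\ref{OCPc}); strict convexity of $j$ makes the three conditions jointly sufficient as well.

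Finally, to identify (\ref{OCPb}) with the strong mixed problem (\ref{adjointBVP}), I would first test with $v\in C_c^\infty(\mathcal{D})$, so that $(\nabla v,\nabla\bar p)_{\mathcal{D}}=(\bar y-y_d,v)_{\mathcal{D}}$ yields $-\Delta\bar p=\bar y-y_d$ in the distributional, hence a.e., sense on $\mathcal{D}$; the boundary value $\bar p|_{\Gamma_D}=0$ is built into the space $V_0$; and integrating by parts in (\ref{OCPb}) for a general $v\in V_0$ and subtracting the volume identity leaves $\langle\mathbf{n}_\Gamma\cdot\nabla\bar p,v\rangle_{\Gamma_N}=0$ for all such $v$, whence $\mathbf{n}_\Gamma\cdot\nabla\bar p=0$ on $\Gamma_N$ by density of the admissible traces in $L^2(\Gamma_N)$. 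The stated regularity $\bar y,\bar p\in H^1(\mathcal{D})$ is immediate from the construction; sharper regularity, if needed elsewhere, follows from elliptic regularity on the smooth or convex polygonal domain once $f+\bar u\in L^2(\mathcal{D})$ is known.

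I expect the only genuinely delicate point to be the treatment of the mixed boundary conditions: the coercivity of $a$ on $V_0$ hinges on $\Gamma_D$ having positive surface measure (otherwise $a$ controls only the $H^1$-seminorm and constants must be factored out), and the inhomogeneous datum $g_D$ must be absorbed by a bounded extension, which is why it is assumed to be defined on the hold-all domain. The randomness is not an obstacle here, because the assertion is pointwise in $\omega$; the joint measurability of $\omega\mapsto(\bar y(\omega),\bar p(\omega),\bar u(\omega))$ needed to make sense of the stochastic formulation (\ref{SOCP}) is a separate matter, not claimed by this theorem.
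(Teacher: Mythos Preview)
Your argument is correct and is exactly the classical route the paper is invoking: the paper does not supply its own proof of Theorem~\ref{continuous} but simply appeals to the convexity of the functional, the linearity of the state equation, and the standard references \cite{AUH2017,GKKSS2019,L1971,T2010} for the well-established existence, uniqueness, and optimality theory at a fixed $\omega$. Your reduction to the reduced functional via the affine solution operator, the direct-method existence from strict convexity and coercivity, and the adjoint calculus leading to $\alpha\bar u+\bar p=0$ are precisely what those references contain, so nothing further is needed.
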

Additional regularity for the optimal triple $(\bar{y}(\omega),\bar{p}(\omega),\bar{u}(\omega))$ for some fixed parameter $\omega\in\Omega$ is required for its numerical approximation  via the cut elements FEM. { We assume sufficient regular behavior of the domain and data \cite{Gri85} such that the elliptic regularity shift theorem holds}:

\begin{theorem}\label{regularity}
{Let $\mathcal{D}(\omega)\subset\mathbb{R}^2$ be an open bounded domain for any outcome $\omega\in\Omega$ with boundary of class $C^{1,1}$}. If the data ${ f} \in L^2(\mathcal{D}(\omega))$, ${ g_D}\in H^{\frac{3}{2}}(\Gamma_D(\omega))$ and 
${ g_N}\in H^{\frac{1}{2}}(\Gamma_N(\omega))$, then the solution $(\bar{y}(\omega),\bar{p}(\omega),\bar{u}(\omega))$ of the problem (\ref{OCPa})-(\ref{OCPc}) satisfies the additional regularity $(\bar{y}(\omega),\bar{p}(\omega),\bar{u}(\omega))\in H^2(\mathcal{D}(\omega))\times H^2(\mathcal{D}(\omega))\times L^2(\mathcal{D}(\omega))$. In particular, the estimates
\begin{eqnarray}
\|\bar{y}(\omega)\|_{2,\mathcal{D}(\omega)} & \leq & C_1 \left(  \|\bar{u}(\omega)\|_{0,\mathcal{D}(\omega)} + C_{\omega}({ f}, {g_D},{ g_N})\right),\\
\|\bar{p}(\omega)\|_{2,\mathcal{D}(\omega)} & \leq & C_2 \left( \|\bar{y}(\omega)\|_{0,\mathcal{D}(\omega)} + \|{ y_d}\|_{0,\mathcal{D}(\omega)}\right),
\end{eqnarray}
with $C_{\omega}({ f, g_D, g_N})=\|{ f}\|_{0,\mathcal{D}(\omega)} + \|{ g_D}\|_{\frac{3}{2},\Gamma_D(\omega)} + \|{ g_N}\|_{\frac{1}{2},\Gamma_N(\omega)}$, are satisfied for some positive constants $C_1=C_1(\mathcal{D}(\omega))$, $C_2=C_2(\mathcal{D}(\omega))$ which depend on $\mathcal{D}(\omega)$.
\end{theorem}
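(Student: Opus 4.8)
The plan is to obtain the $H^2$-regularity by a short two-stage bootstrap built on the elliptic shift theorem for the mixed Dirichlet--Neumann Laplacian on a $C^{1,1}$ domain --- precisely the statement whose hypotheses are being postulated here (cf.~\cite{Gri85}). Concretely, under the standing smoothness/compatibility conditions on $\mathcal{D}(\omega)$, $\Gamma_D(\omega)$ and $\Gamma_N(\omega)$, the weak solution $w\in H^1(\mathcal{D}(\omega))$ of $-\Delta w=F$ in $\mathcal{D}(\omega)$ with $w=h_D$ on $\Gamma_D(\omega)$ and $\mathbf{n}_\Gamma\cdot\nabla w=h_N$ on $\Gamma_N(\omega)$ belongs to $H^2(\mathcal{D}(\omega))$ as soon as $F\in L^2(\mathcal{D}(\omega))$, $h_D\in H^{3/2}(\Gamma_D(\omega))$ and $h_N\in H^{1/2}(\Gamma_N(\omega))$, with the a priori bound $\|w\|_{2,\mathcal{D}(\omega)}\le C(\mathcal{D}(\omega))\big(\|F\|_{0,\mathcal{D}(\omega)}+\|h_D\|_{3/2,\Gamma_D(\omega)}+\|h_N\|_{1/2,\Gamma_N(\omega)}\big)$. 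The optimality system then supplies right-hand sides regular enough to apply this twice.

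First I would treat the state equation (\ref{OCPa}), which is the weak form of the first boundary value problem in (\ref{mixedBVP}) with source $f+\bar u(\omega)$. Theorem \ref{continuous} already provides $\bar u(\omega)\in L^2(\mathcal{D}(\omega))$, and $f\in L^2(\mathcal{D}(\omega))$ by hypothesis, so $f+\bar u(\omega)\in L^2(\mathcal{D}(\omega))$; with the assumed regularity of $g_D$ and $g_N$, the shift theorem gives $\bar y(\omega)\in H^2(\mathcal{D}(\omega))$ and, after a triangle inequality and the definition of $C_\omega(f,g_D,g_N)$, the first claimed estimate with $C_1=C_1(\mathcal{D}(\omega))$.

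Next, since $\bar y(\omega)\in H^2(\mathcal{D}(\omega))\subset L^2(\mathcal{D}(\omega))$ and $y_d\in L^2(\mathcal{D}(\omega))$, the source $\bar y(\omega)-y_d$ of the adjoint problem (\ref{adjointBVP}) --- whose weak form is (\ref{OCPb}) --- lies in $L^2(\mathcal{D}(\omega))$, while its boundary data are homogeneous and hence trivially in the required trace spaces. A second application of the shift theorem yields $\bar p(\omega)\in H^2(\mathcal{D}(\omega))$ with $\|\bar p(\omega)\|_{2,\mathcal{D}(\omega)}\le C_2\|\bar y(\omega)-y_d\|_{0,\mathcal{D}(\omega)}\le C_2\big(\|\bar y(\omega)\|_{0,\mathcal{D}(\omega)}+\|y_d\|_{0,\mathcal{D}(\omega)}\big)$, which is the second claimed estimate. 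Finally, the pointwise optimality relation (\ref{OCPc}) gives $\bar u(\omega)=-\alpha^{-1}\bar p(\omega)$, so $\bar u(\omega)$ inherits the $H^2$-regularity of $\bar p(\omega)$ and in particular belongs to $L^2(\mathcal{D}(\omega))$, completing the argument; chaining the two estimates via $\|\bar u\|_0=\alpha^{-1}\|\bar p\|_0$ and $\|\bar y\|_0\le\|\bar y\|_2$ would even close them in the data alone.

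The step I expect to be the genuine obstacle is not the bootstrap, which is purely formal, but the validity of the $H^2$ shift estimate for the \emph{mixed} problem. Near the junction $\overline{\Gamma_D(\omega)}\cap\overline{\Gamma_N(\omega)}$ --- and at corners, should one insist on the convex polygonal case --- the solution generically picks up singular components of the form $r^{\lambda}$ with $\lambda<1$ that obstruct $H^2$-membership unless an angle/interface compatibility condition is met (for instance a right-angle Dirichlet--Neumann meeting). This is exactly why the theorem is stated under the standing hypothesis that $\mathcal{D}(\omega)$ is of class $C^{1,1}$ with data for which ``the elliptic regularity shift theorem holds'': the admissible realizations of the random geometry must be confined to this class. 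It is worth recording, for the later Monte Carlo / Quasi Monte Carlo analysis, that $C_1$, $C_2$ and $C_\omega$ depend only on $\mathcal{D}(\omega)$ and the fixed deterministic data, which is what will make the corresponding measurability and moment bounds routine.
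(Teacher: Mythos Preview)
Your bootstrap argument is correct and is exactly the natural derivation the paper has in mind: the paper does not give a proof of this theorem at all, but simply records it as a consequence of the elliptic regularity shift theorem for the mixed problem, citing \cite{Gri85}. Your two applications of the shift estimate --- first to the state equation (\ref{OCPa}) with source $f+\bar u(\omega)\in L^2$, then to the adjoint problem (\ref{adjointBVP}) with source $\bar y(\omega)-y_d\in L^2$ and homogeneous boundary data --- and your closing remark that the optimality condition (\ref{OCPc}) upgrades $\bar u(\omega)$ match precisely the intended reasoning. Your caveat about the Dirichlet--Neumann junction is also well placed: the paper handles this not by verifying a compatibility condition but by \emph{assuming} sufficient regularity of domain and data so that the shift theorem holds, which is the sentence immediately preceding the theorem.
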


For the numerical approximation to the solution of the optimal control problem (\ref{cost}),(\ref{SOCP}), we will employ both an 
unfitted finite element discretization method with cut elements for deterministic approximation in the spatial domain $\mathcal{D}(\omega)$ and a Quasi Monte Carlo approximation in the probability domain $\Omega$, as it is described in the following sections.

\section{Unfitted finite elements discrete form}
\label{section3}

The Cut Finite Element Method  \cite{BH2012} discretization of the continuous state and adjoint equations in  (\ref{OCPa})-(\ref{OCPb}) requires the definition of a suitable shape--regular background mesh $\widetilde{\mathcal{T}}_h\subset\mathbb{R}^2$ which contains the original domain of interest $\mathcal{D}(\omega)$, but it is not fitted to its boundary. In this direction,  $\mathcal{D}(\omega)$ is embedded into a larger fictitious domain $\widetilde{\mathcal{D}}\subset\mathbb{R}^2$ of a simpler, usually polygonal geometry. Its boundary $\Gamma(\omega)=\partial\mathcal{D}(\omega)$ is implicitly described by a zero level set function $\phi:\widetilde{\mathcal{D}}\times\Omega\to\mathbb{R}$; i.e., $\Gamma(\omega)=\{x\in\widetilde{\mathcal{D}}: \phi(x,\omega)=0\}$ and 
$\mathcal{D}(\omega)=\{x\in\widetilde{\mathcal{D}}: \phi(x, \omega)<0 \}$. Then, $\widetilde{\mathcal{T}}_h\subseteq\widetilde{\mathcal{D}}$ may be defined as a conforming quasi-uniform triangulation of $\widetilde{\mathcal{D}}$ consisting of shape--regular 
simplices $K$.
The subscript  $h=\max_{K\in\widetilde{\mathcal{T}}_h}diam(K)$ indicates the mesh size and $\mathcal{D}_{\mathcal{T}}$ is the \emph{extended} domain which is covered by the \textit{active} part $\mathcal{T}_h=\{K\in\widetilde{\mathcal{T}}_h: K\cap \mathcal{D}(\omega)\neq\emptyset\}$ of the background mesh $\widetilde{\mathcal{T}}_h$; this situation is illustrated in Figure \ref{cutfig}. In the following, discrete solutions will actually be calculated in a { sample-dependent subspace of the finite element space on the background mesh:}
\begin{equation}\label{space}
{ V^h(\omega)} =\left \{  \upsilon \in C^0(\bar {\mathcal{D}}_{\mathcal T})\,:\,\upsilon |_K  \in P^1(K), \, \forall K\in \mathcal{T}_h \right \}.
\end{equation}

The boundary conditions at $\Gamma(\omega)$ will be satisfied weakly through a variant of Nitsche's method. On the other hand, coercivity over the whole computational domain $\mathcal{D}_{\mathcal{T}}$  is ensured by means of additional ghost penalty terms  which act on the gradient jumps in the boundary zone. Therefore, a more delicate analysis of the boundary grid is required; using the  notation ${G}_h:=\{K\in \widetilde{\mathcal{T}}_h:K\cap\Gamma(\omega)\neq\emptyset\}$ for the set of elements which are intersected by the interface, the following assumptions are required:

\begin{itemize}
\item[(A)] $\Gamma(\omega)$ intersects the boundary $\partial K$ of each cut 
element $K\in G_h$ exactly twice and each edge of $K\in G_h$ at most once. 

\item[(B)] Given a piecewise linear approximation $\Gamma_h(\omega)$ of $\Gamma(\omega)$,
there is a piecewise smooth function which maps $\Gamma_h(\omega)\cap K$ to $\Gamma(\omega)\cap K$
for each cut element $K\in G_h$. 

\item[(C)] For any cut element $K\in G_h$ there exists a neighboring element 
$K'\in { \mathcal{T}_h}$ such that $K'\notin G_h$ and $K\cap K'\neq\emptyset$. The measures 
of the elements $K$, $K'$ and the faces $F$ such that $K\cap F\neq\emptyset$ and 
$K'\cap F\neq\emptyset$ are comparable.
\end{itemize}

The set of element faces associated with $G_h$ is defined by 
\[
\mathcal{F}_G = \left\{F = K \cap K':\,\,\,\textrm{at least one of } K, K' \in G_h \right\}\neq\emptyset.
\] 
The latter definition ensures that all boundary faces of 
$\mathcal{D}_{\mathcal{T}}$ are excluded from $\mathcal{F}_G$.

The  jump of the gradient of $v\in { V^h(\omega)}$ on some  face $F=K\cap K' \in \mathcal{F}_G$ between neighboring elements
is denoted by
$\left[ \kern-0.15em \left[\mathbf{n}_F \cdot \nabla v \right] \kern-0.15em \right] = 
\left. \mathbf{n}_F\cdot\nabla v \right|_K - \left. \mathbf{n}_F \cdot \nabla v \right|_{K'}$, where $\mathbf{n}_F$ is a fixed, but arbitrary,
unit normal to the facet $F$.
\begin{figure}[h]
\centering
\includegraphics[width=0.7\textwidth]{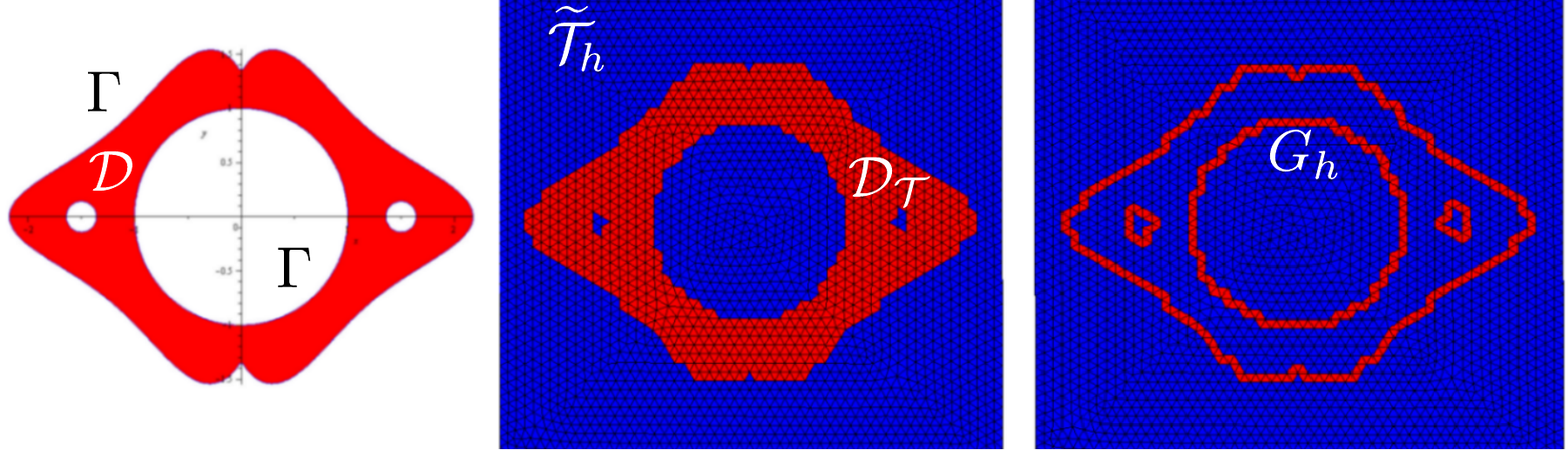}
\caption{The original spatial domain $\mathcal{D}$ (left picture) and its boundary $\Gamma$ are represented implicitly by the level-set function $\phi(x,\omega)$ in (\ref{complex_geometry}), and for $\omega=(\omega_1,\omega_2)=(9,2)$ they are designated by the red colored area. The extended computational domain $\mathcal{D}_{\mathcal{T}}$ is visualized in the middle picture, and it is covered by the active part of the background mesh $\widetilde{\mathcal{T}}_h$ colored in red. The subset $G_{h}$ of elements in $\widetilde{\mathcal{T}}_h$ that intersect the boundary $\Gamma$ is shown in red at the right picture.}\label{cutfig}
\end{figure}
\begin{figure}[h]
\centering
\includegraphics[width=0.75\textwidth]{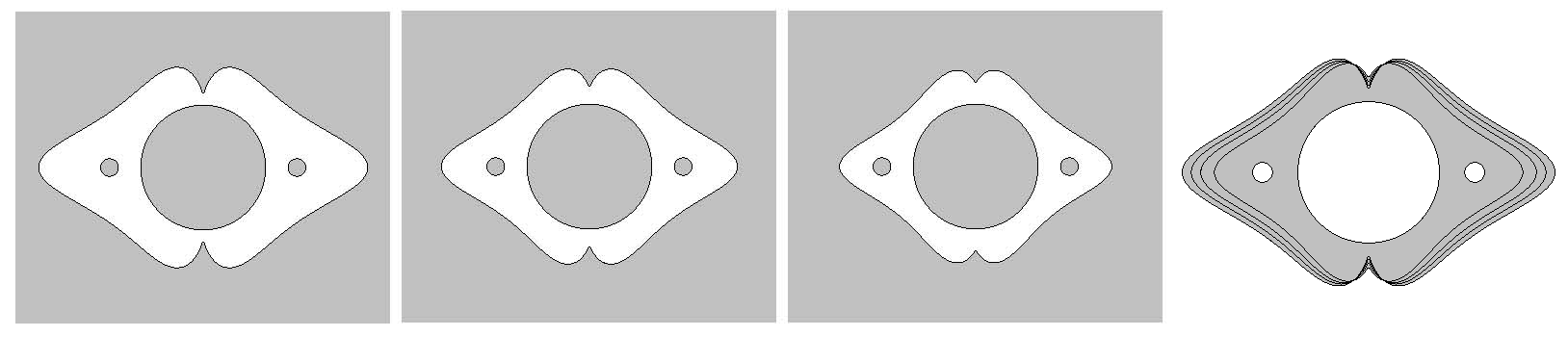}
\caption{Visualization of some parameterized geometry deformations (uncovered areas) prescribed by the level-set function $\phi(x,\omega)$ in (\ref{complex_geometry}) with respect to three samples $\omega=(\omega_1,\omega_2)\in\{(10,2.4), (11, 2.7), (9,2)\}$ from left to right. The rightmost image shows the range of the geometric variation (grey areas).}\label{parametrization}
\end{figure}

The  cut elements FEM discretizations for the state and adjoint equations (\ref{OCPa})--(\ref{OCPb}) now read as follows: find $y^h(\omega), p^h(\omega)\in { V^h(\omega)}$ such that
\begin{subequations}
\begin{eqnarray}\label{DOCP}
A_h(y^h(\omega), { v_h}) & = & ({\bar{u}(\omega), { v_h}})_{\mathcal{D}(\omega)} + L_h({ v_h}),  \quad \forall\,\, {\ v_h}\in { V^h(\omega)} \label{DOCPa}\\
A_h({ v_h}, p^h(\omega)) & = & ({\bar{y}(\omega) - { y_d}, { v_h}})_{\mathcal{D}(\omega)},  \quad\quad \forall\,\, { v_h}\in { V^h(\omega)}. \label{DOCPb}
\end{eqnarray}
\end{subequations}
Here, the discrete bilinear and linear forms $A_h(\cdot, \cdot)$, $L_h(\cdot)$, respectively, are defined by
\begin{eqnarray*}\label{DBF}
A_h(y^h(\omega), {\ v_h}) & = & a_h(y^h(\omega),{ v_h}) + j(y^h(\omega), {\ v_h}) \\
A_h({ v_h}, p^h(\omega)) & = & a_h(p^h(\omega),{ v_h}) + j(p^h(\omega), { v_h}) \\
L_h({ v_h}) & = & ({\ f}, { v_h})_{\mathcal{D}(\omega)} + 
\langle{{g_D}, \gamma_D(\omega) h^{-1}{ v_h} - \mathbf{n}_\Gamma\cdot\nabla { v_h}}\rangle_{\Gamma_D(\omega)} \\
& & \,\,\, + \langle{{\ g_N}, { v_h} + \gamma_N h \mathbf{n}_\Gamma\cdot\nabla { v_h}}\rangle_{\Gamma_N(\omega)} 
\end{eqnarray*}
in terms of the modified discrete bilinear form
\begin{eqnarray*}
a_h(w^h(\omega),{ v_h}) & = & ({\nabla w^h(\omega), \nabla {v_h}})_{\mathcal{D}(\omega)} -
\langle{\mathbf{n}_\Gamma\cdot\nabla w^h(\omega), { v_h}}\rangle_{\Gamma_D(\omega)} \\
&   & \qquad - \langle{\mathbf{n}_\Gamma\cdot\nabla {\ v_h}, w^h(\omega)}\rangle_{\Gamma_D(\omega)} + \langle{\gamma_D h^{-1} w^h(\omega), { v_h}}\rangle_{\Gamma_D(\omega)} \\
&   & \qquad + \langle{\gamma_N h\mathbf{n}_\Gamma\cdot \nabla w^h(\omega), \mathbf{n}_\Gamma\cdot \nabla {v_h}}\rangle_{\Gamma_N(\omega)}
\end{eqnarray*}
and the stabilization term
\[
j(w^h(\omega), {\ v_h})  =  \mathop\sum\limits_{F\in \mathcal{F}_G}{\langle{\gamma_1 h[\kern-0.15em[\mathbf{n}_F\cdot\nabla w^h(\omega) ]\kern-0.15em], \left[\kern-0.15em\left[\mathbf{n}_F\cdot\nabla {\ v_h}\right]\kern-0.15em\right]}\rangle_F},
\]
that extends coercivity from the physical domain $\mathcal{D}(\omega)$ to $\mathcal{D}_{\mathcal{T}}$ ($w^h(\omega)=y^h(\omega), p^h(\omega)$). The quantities $\gamma_D$, $\gamma_N$ and $\gamma_1$ are positive penalty parameters. 

We remark that the auxiliary problems (\ref{DOCPa})-(\ref{DOCPb}) are \emph{uncoupled} equations. 
We recall \cite{BH2012} where the discrete space ${ V^h(\omega)}$ approximates  functions in $H^2(\mathcal{D}(\omega))$ optimally with  respect to the following mesh dependent norms:
\begin{eqnarray}
\left\vert\kern-0.25ex\left\vert\kern-0.25ex\left\vert {v(\omega)}
\right\vert\kern-0.25ex\right\vert\kern-0.25ex\right\vert_{*}^2  & := & \|\nabla v(\omega)\|_{0,\mathcal{D}(\omega)}^2 + 
\|h^{\frac{1}{2}}\mathbf{n}_\Gamma\cdot\nabla v(\omega)\|_{0,\Gamma_N(\omega)}^2 + 
\|\gamma_D^{\frac{1}{2}}h^{-\frac{1}{2}} v(\omega)\|_{0,\Gamma_D(\omega)}^2, \label{star_norm}\\
\left\vert\kern-0.25ex\left\vert\kern-0.25ex\left\vert {v(\omega)}
\right\vert\kern-0.25ex\right\vert\kern-0.25ex\right\vert_{h}^2 & := & \|\nabla v(\omega)\|_{0,\mathcal{D}_{\mathcal{T}}}^2 + 
\|\gamma_N^{\frac{1}{2}}h^{\frac{1}{2}}\mathbf{n}_\Gamma\cdot\nabla v(\omega)\|_{0,\Gamma_N(\omega)}^2 + 
\|\gamma_D^{\frac{1}{2}}h^{-\frac{1}{2}} v(\omega)\|_{0,\Gamma_D(\omega)}^2 \nonumber\\
& & \qquad\qquad\qquad\qquad\qquad\qquad\qquad\qquad\qquad\,\,\,
+ j(v(\omega),v(\omega)).\nonumber
\end{eqnarray}
Notice that $\left\vert\kern-0.25ex\left\vert\kern-0.25ex\left\vert {v(\omega)} 
\right\vert\kern-0.25ex\right\vert\kern-0.25ex\right\vert_{*}\lesssim \left\vert\kern-0.25ex\left\vert\kern-0.25ex\left\vert {v(\omega)}
\right\vert\kern-0.25ex\right\vert\kern-0.25ex\right\vert_{h}$ for $v(\omega)\in H^2({\mathcal{T}_h})$, where the relation 
$a\lesssim b$ signifies that there exists a generic constant $C>0$ that is 
always independent of $h$ such that $a\leq C b$.

The following result, \cite[Lemmas 6 and 7]{BH2012}, verifies that 
the bilinear form $A_h(\cdot, \cdot)$ is coercive and bounded with 
respect to the above norms. 

\begin{lemma}\label{lem3.1}
Let $\gamma_D$ be sufficiently large and $\gamma_1=1$, $\gamma_N\geq0$. Then 
for any $w_h(\omega), v_h(\omega) \in { V^h(\omega)}$
\[
A_h(w_h(\omega), w_h(\omega)) \gtrsim  \left\vert\kern-0.25ex\left\vert\kern-0.25ex\left\vert {w_h(\omega)}
\right\vert\kern-0.25ex\right\vert\kern-0.25ex\right\vert_h^2  \quad\textrm{and}\quad
A_h(w_h(\omega), v_h(\omega)) \lesssim  \left\vert\kern-0.25ex\left\vert\kern-0.25ex\left\vert{w_h(\omega)}
\right\vert\kern-0.25ex\right\vert\kern-0.25ex\right\vert_h \left\vert\kern-0.25ex\left\vert\kern-0.25ex\left\vert{v_h(\omega)}
\right\vert\kern-0.25ex\right\vert\kern-0.25ex\right\vert_h,
\]
independently of $h$ and of how the triangulation is intersected by $\Gamma(\omega)$.
\end{lemma}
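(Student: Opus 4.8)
The plan is to reproduce, in the present notation, the coercivity and continuity argument of \cite[Lemmas~6 and 7]{BH2012}; the only genuinely delicate point is the control of boundary gradient traces \emph{uniformly} in how $\Gamma(\omega)$ cuts $\widetilde{\mathcal{T}}_h$, which is exactly what the ghost-penalty form $j(\cdot,\cdot)$ and assumptions (A)--(C) supply. \textbf{Coercivity.} First I would expand $A_h(w_h,w_h)=a_h(w_h,w_h)+j(w_h,w_h)$: the two symmetric Nitsche terms on $\Gamma_D(\omega)$ coincide when both arguments equal $w_h$, so $A_h(w_h,w_h)=\|\nabla w_h\|_{0,\mathcal{D}(\omega)}^2-2\langle\mathbf{n}_\Gamma\cdot\nabla w_h,w_h\rangle_{\Gamma_D(\omega)}+\gamma_D\|h^{-1/2}w_h\|_{0,\Gamma_D(\omega)}^2+\gamma_N\|h^{1/2}\mathbf{n}_\Gamma\cdot\nabla w_h\|_{0,\Gamma_N(\omega)}^2+j(w_h,w_h)$, the only sign-indefinite contribution being the cross term on $\Gamma_D(\omega)$. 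The key auxiliary estimate is a \emph{ghost-penalty-enhanced inverse trace inequality}: by (C), every cut element is joined through a uniformly bounded chain of faces in $\mathcal{F}_G$ (with comparable element/face measures) to an element not in $G_h$, and $j(w_h,w_h)$ penalises precisely the gradient jumps across those faces; an induction over this chain yields $\|\nabla w_h\|_{0,\mathcal{D}_{\mathcal{T}}}^2\lesssim\|\nabla w_h\|_{0,\mathcal{D}(\omega)}^2+j(w_h,w_h)$. Composing with the standard (fitted) discrete trace inequality on the active mesh $\mathcal{T}_h$ gives $\|h^{1/2}\mathbf{n}_\Gamma\cdot\nabla w_h\|_{0,\Gamma(\omega)}^2\lesssim\|\nabla w_h\|_{0,\mathcal{D}_{\mathcal{T}}}^2\lesssim\|\nabla w_h\|_{0,\mathcal{D}(\omega)}^2+j(w_h,w_h)$, with constants independent of the cut position.

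\textbf{Absorbing the cross term.} Then I would apply Young's inequality, $2|\langle\mathbf{n}_\Gamma\cdot\nabla w_h,w_h\rangle_{\Gamma_D(\omega)}|\le \eps\|h^{1/2}\mathbf{n}_\Gamma\cdot\nabla w_h\|_{0,\Gamma_D(\omega)}^2+\eps^{-1}\|h^{-1/2}w_h\|_{0,\Gamma_D(\omega)}^2$. Bounding the first term by the enhanced estimate above as $C\eps(\|\nabla w_h\|_{0,\mathcal{D}(\omega)}^2+j(w_h,w_h))$ and fixing $\eps$ with $C\eps<1$ absorbs it into the bulk gradient and ghost-penalty terms; the second term is absorbed by taking $\gamma_D$ large — this is where the ``$\gamma_D$ sufficiently large'' hypothesis enters. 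What survives is $A_h(w_h,w_h)\gtrsim\|\nabla w_h\|_{0,\mathcal{D}(\omega)}^2+\gamma_D\|h^{-1/2}w_h\|_{0,\Gamma_D(\omega)}^2+\gamma_N\|h^{1/2}\mathbf{n}_\Gamma\cdot\nabla w_h\|_{0,\Gamma_N(\omega)}^2+j(w_h,w_h)$, and one final use of the ghost-penalty inverse inequality upgrades $\|\nabla w_h\|_{0,\mathcal{D}(\omega)}^2+j(w_h,w_h)$ to $\|\nabla w_h\|_{0,\mathcal{D}_{\mathcal{T}}}^2$, hence $A_h(w_h,w_h)\gtrsim\vertiii{w_h(\omega)}_h^2$.

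\textbf{Continuity.} For the bound I would use Cauchy--Schwarz term by term. The bulk term, the $\Gamma_D$-penalty term, the $\Gamma_N$ term and $j(\cdot,\cdot)$ appear, up to the fixed constants $\gamma_D,\gamma_N,\gamma_1$, directly inside $\vertiii{\cdot}_h$, so they are immediately dominated by $\vertiii{w_h}_h\vertiii{v_h}_h$. For the two Nitsche cross terms on $\Gamma_D(\omega)$ I would write $|\langle\mathbf{n}_\Gamma\cdot\nabla w_h,v_h\rangle_{\Gamma_D(\omega)}|\le\|h^{1/2}\mathbf{n}_\Gamma\cdot\nabla w_h\|_{0,\Gamma_D(\omega)}\,\|h^{-1/2}v_h\|_{0,\Gamma_D(\omega)}$ and apply the enhanced trace inequality to the first factor, so that it is $\lesssim\big(\|\nabla w_h\|_{0,\mathcal{D}_{\mathcal{T}}}+j(w_h,w_h)^{1/2}\big)\,\|h^{-1/2}v_h\|_{0,\Gamma_D(\omega)}\lesssim\vertiii{w_h}_h\vertiii{v_h}_h$, and symmetrically with the roles of $w_h$ and $v_h$ exchanged.

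\textbf{Main obstacle.} The crux is the ghost-penalty inverse inequality $\|\nabla w_h\|_{0,\mathcal{D}_{\mathcal{T}}}^2\lesssim\|\nabla w_h\|_{0,\mathcal{D}(\omega)}^2+j(w_h,w_h)$ with a constant independent of the cut configuration: this is where assumptions (A)--(C) are essential (finitely many, uniformly bounded chains of neighbours; comparable element and face measures; and no spurious contributions, since faces of $\partial\mathcal{D}_{\mathcal{T}}$ are excluded from $\mathcal{F}_G$). Since the statement is quoted verbatim from \cite[Lemmas~6 and 7]{BH2012}, one may simply cite it; the induction-over-neighbours argument for that inverse inequality, together with its composition with the standard discrete trace inequality, are the only non-routine ingredients.
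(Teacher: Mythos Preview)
Your proposal is correct and faithfully reproduces the argument of \cite[Lemmas~6 and~7]{BH2012}, which is precisely what the paper does: it states the lemma without proof and simply cites that reference. In that sense you have gone further than the paper itself, but your sketch matches the cited source's approach (ghost-penalty-enhanced inverse trace inequality under assumptions (A)--(C), Young's inequality to absorb the Nitsche cross term with $\gamma_D$ large, then term-by-term Cauchy--Schwarz for continuity), so there is nothing to add.
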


\begin{lemma}\label{lemma3.2}
Let $(\bar{y}(\omega),\bar{p}(\omega),\bar{u}(\omega))\in H^2(\mathcal{D}(\omega))\times H^2(\mathcal{D}(\omega))\times L^2(\mathcal{D}(\omega))$
be the optimal solution of the continuous problem \eqref{OCPa}-\eqref{OCPb} for a fixed parameter $\omega\in\Omega$. Then, for penalty parameters 
$\gamma_1=1$, $\gamma_N\geq0$ and $\gamma_D$ sufficiently large,  the uncoupled equations \eqref{DOCPa}-\eqref{DOCPb} admit 
unique solutions $y^h(\omega), p^h(\omega)\in { V^h(\omega)}$, such that the following a--priori error estimates hold:
\begin{eqnarray}
\vert\kern-0.25ex\vert\kern-0.25ex\vert \bar{y}(\omega)-y^h(\omega) \vert\kern-0.25ex\vert\kern-0.25ex\vert_{*} & \lesssim & h\left(\|\bar{u}(\omega)\|_{0,\mathcal{D}(\omega)} + C_{\omega}({ f,g_D,g_N})\right) \label{lem4.2:a}
\\
\vert\kern-0.25ex\vert\kern-0.25ex\vert \bar{p}(\omega)-p^h(\omega) \vert\kern-0.25ex\vert\kern-0.25ex\vert_{*} & \lesssim & h\left(\|\bar{u}(\omega)\|_{0,\mathcal{D}(\omega)} + \|{ y_d}\|_{0,\mathcal{D}(\omega)}\right. \label{lem4.2:b}  + C_{\omega}({ f,g_D,g_N})) \\
\|\bar{y}(\omega)-y^h(\omega)\|_{0,\mathcal{D}(\omega)} & \lesssim & h^2 \left(\|\bar{u}(\omega)\|_{0,\mathcal{D}(\omega)} + C_{\omega}({ f,g_D,g_N})\right) \label{lem4.2:c}\\
\|\bar{p}(\omega)-p^h(\omega)\|_{0,\mathcal{D}(\omega)} & \lesssim & h^2 \left(\|\bar{u}(\omega)\|_{0,\mathcal{D}(\omega)} + \|{ y_d}\|_{0,\mathcal{D}(\omega)}  
+ C_{\omega}({\ f,g_D,g_N})\right),\label{lem4.2:d}
\end{eqnarray}
\end{lemma}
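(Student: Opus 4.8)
The plan is to regard \eqref{DOCPa} and \eqref{DOCPb} as two \emph{independent} CutFEM discretizations of a mixed Dirichlet--Neumann Poisson problem — the first with source $f+\bar u(\omega)$ and boundary data $g_D,g_N$, the second with source $\bar y(\omega)-y_d$ and homogeneous boundary data — and to run, separately for each, the Nitsche/ghost--penalty error analysis of \cite[Lemmas 6 and 7]{BH2012}, tracking the $\omega$-dependence of the constants and invoking Theorem \ref{regularity} at the end. First I would observe that the right-hand sides of \eqref{DOCPa}--\eqref{DOCPb} are bounded linear functionals on $(V^h(\omega),\vertiii{\cdot}_h)$ — for the Nitsche boundary terms this uses the discrete trace and inverse inequalities on cut elements that are legitimate under assumptions (A)--(C) — so Lemma \ref{lem3.1} together with Lax--Milgram yields unique $y^h(\omega),p^h(\omega)\in V^h(\omega)$. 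Next I would establish Galerkin orthogonality: fixing a stable Sobolev extension $E\bar y\in H^2(\mathbb{R}^2)$ of $\bar y(\omega)$ with $\|E\bar y\|_{2,\mathcal D_{\mathcal T}}\lesssim\|\bar y(\omega)\|_{2,\mathcal D(\omega)}$, the gradient jumps of $E\bar y$ vanish so $j(E\bar y,v_h)=0$, and since $A_h$ integrates only over $\mathcal D(\omega),\Gamma(\omega)$, integration by parts in $-\Delta\bar y=f+\bar u$ against $v_h$ gives $A_h(E\bar y,v_h)=(\bar u(\omega),v_h)_{\mathcal D(\omega)}+L_h(v_h)$; subtracting \eqref{DOCPa} yields $A_h(E\bar y-y^h(\omega),v_h)=0$ for all $v_h\in V^h(\omega)$, and similarly for the adjoint pair (using that $A_h$ is symmetric and Nitsche-consistent in both slots).

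For the energy estimates \eqref{lem4.2:a}--\eqref{lem4.2:b} I would use the interpolant $\pi_hE\bar y\in V^h(\omega)$, for which the standard polynomial approximation bound together with the $O(h)$ control of gradient jumps gives $\vertiii{E\bar y-\pi_hE\bar y}_h\lesssim h\|\bar y(\omega)\|_{2,\mathcal D(\omega)}$. Splitting $E\bar y-y^h=(E\bar y-\pi_hE\bar y)+(\pi_hE\bar y-y^h)$, coercivity of Lemma \ref{lem3.1}, Galerkin orthogonality and boundedness give $\vertiii{\pi_hE\bar y-y^h}_h^2\lesssim A_h(\pi_hE\bar y-E\bar y,\pi_hE\bar y-y^h)\lesssim\vertiii{\pi_hE\bar y-E\bar y}_h\,\vertiii{\pi_hE\bar y-y^h}_h$, hence $\vertiii{E\bar y-y^h}_h\lesssim h\|\bar y(\omega)\|_{2,\mathcal D(\omega)}$. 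Since $\vertiii{\cdot}_*\lesssim\vertiii{\cdot}_h$ on $H^2(\mathcal T_h)$ and $\vertiii{\cdot}_*$ only sees $\mathcal D(\omega)$ where $E\bar y=\bar y$, this is \eqref{lem4.2:a}; the identical argument applied to \eqref{DOCPb} gives \eqref{lem4.2:b}.

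The $L^2$ estimates \eqref{lem4.2:c}--\eqref{lem4.2:d} follow by an Aubin--Nitsche duality argument. Let $z\in V_0(\omega)$ solve $-\Delta z=\bar y-y^h$ in $\mathcal D(\omega)$ with homogeneous mixed boundary conditions; the $C^{1,1}$ regularity of Theorem \ref{regularity} gives $\|z\|_{2,\mathcal D(\omega)}\lesssim\|\bar y-y^h\|_{0,\mathcal D(\omega)}$. Using the consistency identity for $z$ (its gradient jumps vanish and its homogeneous traces kill the remaining Nitsche terms, so $A_h(w,z)=(\bar y-y^h,w)_{\mathcal D(\omega)}$ for $w\in H^1(\mathcal T_h)$) and Galerkin orthogonality, $\|\bar y-y^h\|_{0,\mathcal D(\omega)}^2=A_h(\bar y-y^h,z)=A_h(\bar y-y^h,z-\pi_hEz)\lesssim\vertiii{E\bar y-y^h}_h\,\vertiii{Ez-\pi_hEz}_h\lesssim h\|\bar y(\omega)\|_{2,\mathcal D(\omega)}\cdot h\|z\|_{2,\mathcal D(\omega)}$, and dividing by $\|\bar y-y^h\|_{0,\mathcal D(\omega)}$ yields \eqref{lem4.2:c}; \eqref{lem4.2:d} follows in the same way. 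Finally, substituting the regularity bounds of Theorem \ref{regularity}, $\|\bar y(\omega)\|_{2,\mathcal D(\omega)}\lesssim\|\bar u(\omega)\|_{0,\mathcal D(\omega)}+C_\omega(f,g_D,g_N)$ and $\|\bar p(\omega)\|_{2,\mathcal D(\omega)}\lesssim\|\bar y(\omega)\|_{0,\mathcal D(\omega)}+\|y_d\|_{0,\mathcal D(\omega)}\lesssim\|\bar u(\omega)\|_{0,\mathcal D(\omega)}+\|y_d\|_{0,\mathcal D(\omega)}+C_\omega(f,g_D,g_N)$, produces the stated right-hand sides.

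I expect the main obstacle to be the bookkeeping around the extended domain: verifying that the consistency identity genuinely holds for the \emph{cut} form $A_h$ — whose volume and boundary integrals live on $\mathcal D(\omega),\Gamma(\omega)$ while its arguments live on the active mesh $\mathcal D_{\mathcal T}$ — and that the ghost--penalty term $j(\cdot,\cdot)$ is precisely what is needed to make the $\vertiii{\cdot}_h$-interpolation error optimal on $\mathcal D_{\mathcal T}$ while keeping every constant independent of how $\Gamma(\omega)$ slices the triangulation (hence robust in $\omega$). All of these ingredients are already contained in \cite[Lemmas 6 and 7]{BH2012} and the accompanying interpolation and trace inequalities; the only genuinely new observation is that the discrete optimality system \eqref{DOCPa}--\eqref{DOCPb} decouples, so the error analysis for the optimal-control triple reduces to two scalar applications of that CutFEM theory.
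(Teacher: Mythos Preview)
Your proposal is correct and follows essentially the same approach as the paper: recognize that \eqref{DOCPa}--\eqref{DOCPb} are two decoupled CutFEM problems, apply the Burman--Hansbo error theory to each, and then insert the regularity bounds of Theorem~\ref{regularity}. The only difference is granularity: the paper simply cites \cite[Corollary~9 and Proposition~10]{BH2012} for the energy and $L^2$ estimates and then invokes Theorem~\ref{regularity} (with a Sobolev embedding to pass from $\|\bar y\|_{0,\mathcal D(\omega)}$ to data), whereas you unpack those cited results in full (extension, Galerkin orthogonality, C\'ea argument, Aubin--Nitsche duality).
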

\begin{proof}
The estimate \eqref{lem4.2:a}  follows readily by \cite[Corollary 9]{BH2012} and Theorem \ref{regularity},  while \eqref{lem4.2:b} 
is due to
\begin{eqnarray*}
\vert\kern-0.25ex\vert\kern-0.25ex\vert \bar{p}(\omega)-p^h(\omega) \vert\kern-0.25ex\vert\kern-0.25ex\vert_{*} & \lesssim & h\left(\|\bar{y}(\omega)\|_{2,\mathcal{D}(\omega)} + 
\|{y_d}\|_{0,\mathcal{D}(\omega)} \right) \\
& \lesssim & h\left(\|\bar{u}(\omega)\|_{0,\mathcal{D}(\omega)} + \|{y_d}\|_{0,\mathcal{D}(\omega)} + C_{\omega}({ f,g_D,g_N})\right),
\end{eqnarray*}
where the second inequality follows by the Sobolev embedding theorem.
The proof of \eqref{lem4.2:c}--\eqref{lem4.2:d} is analogous, invoking \cite[Proposition 10]{BH2012}.
\end{proof}

\section{Discrete optimality system and deterministic error analysis}
\label{section4}
A discrete analogue of the parameterized optimization problem \eqref{cost}-\eqref{SOCP} may be obtained through the variational discretization concept introduced by Hinze in \cite{H2005} for any realization $\omega\in\Omega$. The basic idea of this approach is to discretize only the state which results in a new optimal control problem defined by  
\begin{equation}\label{dcost}
\min \mathcal{J}_h(y_h(\omega),u(\omega)) := \frac{1}{2}\|y_h(\omega) - {y_d}\|_{0,\mathcal{D}(\omega)}^2 + 
\frac{\alpha}{2}\| u(\omega)\|_{0,\mathcal{D}(\omega)}^2
\end{equation}
over all $(y_h(\omega),u(\omega))\in { V^h(\omega)}\times L^2(\mathcal{D}(\omega))$, where $y_h(\omega)=y_h(\cdot,\omega)$ satisfies
\begin{equation}\label{dvariational}
A_h(y_h(\omega), { v_h}) =({u(\omega), { v_h}})_{\mathcal{D}(\omega)} + L_h({ v_h}),  \quad \forall\,\, {\ v_h}\in { V^h(\omega)}.
\end{equation}
Arguing as in Theorem \ref{continuous}, analogous existence 
and uniqueness statements hold for a fixed parameter $\omega\in\Omega$.

\begin{lemma}\label{discrete}
The problem \eqref{dcost}-\eqref{dvariational} admits a unique solution  $(\bar{y}_h(\omega), \bar{u}_h(\omega))\in { V^h(\omega)}\times L^2(\mathcal{D}(\omega))$  with  associated   adjoint state $\bar{p}_h(\omega)\in { V^h(\omega)}$ that satisfy the optimality conditions
\begin{subequations}
\begin{eqnarray}\label{dOCP}
A_h(\bar{y}_h(\omega), { v_h}) & = & ({\bar{u}_h(\omega), {v_h}})_{\mathcal{D}(\omega)} + L_h({ v_h}),  \quad \forall\,\, {v_h}\in {V^h(\omega)} \label{dOCPa} \\ 
A_h({ v_h}, \bar{p}_h(\omega)) & = &  ( \bar{y}_h(\omega) - {y_d}, { v_h} )_{\mathcal{D}(\omega)}, \quad \forall\,\, { v_h}\in { V^h(\omega)} \label{dOCPb} \\ 
\alpha\bar{u}_h(\omega) + \bar{p}_h(\omega) & = & 0.
\quad\quad\quad\quad\quad\,\,   \label{dOCPc}
\end{eqnarray}
\end{subequations}
\end{lemma}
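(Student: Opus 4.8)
The plan is to reproduce the argument of Theorem~\ref{continuous} at the discrete level, reducing the constrained problem \eqref{dcost}--\eqref{dvariational} to an unconstrained quadratic minimization over the control space $L^2(\mathcal{D}(\omega))$.

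\emph{Step 1 (discrete state solution operator).} For $\gamma_1=1$, $\gamma_N\ge 0$ and $\gamma_D$ sufficiently large, Lemma~\ref{lem3.1} shows that $A_h(\cdot,\cdot)$ is coercive and bounded on $V^h(\omega)$ with respect to $\vertiii{\cdot}_h$. Since, by the trace and inverse estimates on cut elements established in \cite{BH2012}, the map $v_h\mapsto (u,v_h)_{\mathcal{D}(\omega)}+L_h(v_h)$ is a bounded linear functional on $(V^h(\omega),\vertiii{\cdot}_h)$ for every fixed $u\in L^2(\mathcal{D}(\omega))$ and data $f,g_D,g_N$, the Lax--Milgram lemma yields a unique $y_h(u)\in V^h(\omega)$ solving \eqref{dvariational}. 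Writing $y_h(u)=S_h u+\hat y_h$ with $S_h\colon L^2(\mathcal{D}(\omega))\to V^h(\omega)$ linear and bounded and $\hat y_h$ the solution for $u=0$, the reduced cost functional
\[
j_h(u):=\mathcal{J}_h\bigl(S_h u+\hat y_h,\,u\bigr)=\tfrac12\|S_h u+\hat y_h-y_d\|_{0,\mathcal{D}(\omega)}^2+\tfrac{\alpha}{2}\|u\|_{0,\mathcal{D}(\omega)}^2
\]
is well defined on the Hilbert space $L^2(\mathcal{D}(\omega))$.

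\emph{Step 2 (existence, uniqueness, first-order condition).} Because $\alpha>0$, $j_h$ is continuous, strongly convex and coercive on $L^2(\mathcal{D}(\omega))$; hence it possesses a unique minimizer $\bar u_h(\omega)$, and we set $\bar y_h(\omega):=y_h(\bar u_h(\omega))$. Since $j_h$ is Fréchet differentiable and convex, $\bar u_h(\omega)$ is characterized by $j_h'(\bar u_h(\omega))=0$, i.e.\
\[
\bigl(S_h^\ast(\bar y_h(\omega)-y_d)+\alpha\bar u_h(\omega),\,v\bigr)_{\mathcal{D}(\omega)}=0\qquad\forall\, v\in L^2(\mathcal{D}(\omega)).
\]

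\emph{Step 3 (adjoint state and optimality system).} Define $\bar p_h(\omega)\in V^h(\omega)$ as the unique solution of \eqref{dOCPb}; this is again legitimate by Lax--Milgram, applied to the form $(q_h,v_h)\mapsto A_h(v_h,q_h)$, which inherits coercivity and boundedness from Lemma~\ref{lem3.1}. Testing \eqref{dOCPb} with $v_h=S_h v$ and the state equation for the pair $(v,S_h v)$ with $v_h=\bar p_h(\omega)$ gives, for all $v\in L^2(\mathcal{D}(\omega))$,
\[
(\bar y_h(\omega)-y_d,\,S_h v)_{\mathcal{D}(\omega)}=A_h(S_h v,\bar p_h(\omega))=(v,\bar p_h(\omega))_{\mathcal{D}(\omega)},
\]
so that $S_h^\ast(\bar y_h(\omega)-y_d)=\bar p_h(\omega)$ in $L^2(\mathcal{D}(\omega))$. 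Inserting this into the identity of Step~2 yields \eqref{dOCPc}; moreover \eqref{dOCPa} is precisely \eqref{dvariational} at $u=\bar u_h(\omega)$, and \eqref{dOCPb} holds by construction. Conversely, any triple $(\bar y_h,\bar p_h,\bar u_h)$ solving \eqref{dOCPa}--\eqref{dOCPc} produces, by the same computation, a zero of $j_h'$, hence coincides with the unique minimizer, which establishes uniqueness of the optimality system.

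The argument is essentially routine; the only points that require the (standard) CutFEM machinery are the boundedness on $(V^h(\omega),\vertiii{\cdot}_h)$ of $L_h$ and of $v_h\mapsto(u,v_h)_{\mathcal{D}(\omega)}$, needed to invoke Lax--Milgram, together with the adjoint identity $S_h^\ast(\bar y_h-y_d)=\bar p_h$, which is exactly what forces the right-hand side of \eqref{dOCPb}. No genuine difficulty is anticipated.
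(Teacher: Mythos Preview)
Your proposal is correct and follows precisely the route the paper indicates: the paper does not give an explicit proof of Lemma~\ref{discrete} but simply remarks ``Arguing as in Theorem~\ref{continuous}, analogous existence and uniqueness statements hold,'' relying on the standard reduced--functional argument for quadratic--linear control problems. Your write--up supplies exactly these details---Lax--Milgram via Lemma~\ref{lem3.1} for the discrete solution operator, strong convexity of the reduced cost, and the adjoint identification $S_h^\ast(\bar y_h-y_d)=\bar p_h$---so there is nothing to add.
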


Hence, although  the control space  is not directly discretized in \eqref{dcost}, { the} optimality condition \eqref{dOCPc} enforces an implicit discretization on the control   through the adjoint function. 

For a fixed realization $\omega\in\Omega$ the next result provides error estimates between the solutions $(\bar{y}(\omega), \bar{p}(\omega), \bar{u}(\omega))$ and 
$(\bar{y}_h(\omega), \bar{p}_h(\omega), \bar{u}_h(\omega))$ of the continuous 
optimal control problem \eqref{OCPa}-\eqref{OCPc} and its discrete counterpart 
\eqref{dOCPa}-\eqref{dOCPc}, respectively.

\begin{theorem}\label{errors1}
Let $(\bar{y}(\omega), \bar{p}(\omega), \bar{u}(\omega))\in H^2(\mathcal{D}(\omega))\times H^2(\mathcal{D}(\omega))\times L^2(\mathcal{D}(\omega))$
and $(\bar{y}_h(\omega), \bar{p}_h(\omega), \bar{u}_h(\omega))\in {V^h(\omega)} \times { V^h(\omega)}\times L^2(\mathcal{D}(\omega))$ be the
solutions of the continuous optimal control problem \eqref{OCPa}-\eqref{OCPc} 
and its discrete counterpart \eqref{dOCPa}-\eqref{dOCPc}, respectively, for a fixed parameter $\omega\in\Omega$. Then 
we obtain
\begin{align}\label{eq4.4}
\alpha\|\bar{u}(\omega)-\bar{u}_h(\omega)\|_{0,\mathcal{D}(\omega)}^2 + \|\bar{y}(\omega)-\bar{y}_h(\omega)\|_{0,\mathcal{D}(\omega)}^2 
\leq \frac{1}{\alpha}\|\bar{p}(\omega)-p^h(\omega)\|_{0,\mathcal{D}(\omega)}^2 
\\ \nonumber \quad
+ \|\bar{y}(\omega)-y^h(\omega)\|_{0,\mathcal{D}(\omega)}^2 \\
\|\bar{p}(\omega)-\bar{p}_h(\omega)\|_{0,\mathcal{D}(\omega)} \lesssim 
\|\bar{p}(\omega)-p^h(\omega)\|_{0,\mathcal{D}(\omega)} + \|\bar{y}(\omega)-\bar{y}_h(\omega)\|_{0,\mathcal{D}(\omega)} \label{eq4.5} \\
\left\vert\kern-0.25ex\left\vert\kern-0.25ex\left\vert{\bar{y}(\omega)-\bar{y}_h(\omega)}
\right\vert\kern-0.25ex\right\vert\kern-0.25ex\right\vert_{*} \lesssim 
\vert\kern-0.25ex\vert\kern-0.25ex\vert \bar{y}(\omega)-y^h(\omega) \vert\kern-0.25ex\vert\kern-0.25ex\vert_{*} + \|\bar{u}(\omega)-\bar{u}_h(\omega)\|_{0,\mathcal{D}(\omega)} \label{eq4.6} \\
\left\vert\kern-0.25ex\left\vert\kern-0.25ex\left\vert{\bar{p}(\omega)-\bar{p}_h(\omega)} 
\right\vert\kern-0.25ex\right\vert\kern-0.25ex\right\vert_{*} \lesssim 
\vert\kern-0.25ex\vert\kern-0.25ex\vert \bar{p}(\omega)-p^h(\omega) \vert\kern-0.25ex\vert\kern-0.25ex\vert_{*} + \|\bar{y}(\omega)-\bar{y}_h(\omega)\|_{0,\mathcal{D}(\omega)}, \label{eq4.7} 
\end{align}
where $y^h(\omega), p^h(\omega)\in { V^h(\omega)}$ are the cut elements FEM solutions of the discrete
expressions \eqref{DOCPa}-\eqref{DOCPb}.
\end{theorem}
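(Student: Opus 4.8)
The plan is to treat these four estimates as the standard consequences of combining the \emph{continuous} optimality system \eqref{OCPa}-\eqref{OCPc}, the \emph{discrete} optimality system \eqref{dOCPa}-\eqref{dOCPc}, and the auxiliary cut-element solutions \eqref{DOCPa}-\eqref{DOCPb} of Lemma \ref{lemma3.2}. The key device is to introduce, for the fixed realization $\omega\in\Omega$, the auxiliary state $y^h(\omega)$ solving \eqref{DOCPa} with the \emph{continuous} optimal control $\bar u(\omega)$ as data, and likewise the auxiliary adjoint $p^h(\omega)$ solving \eqref{DOCPb} with the \emph{continuous} optimal state $\bar y(\omega)$; these are precisely the functions for which Lemma \ref{lemma3.2} already provides $O(h)$ and $O(h^2)$ bounds. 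The triangle inequality then splits every error into a ``discretization-of-the-operator'' part (controlled by Lemma \ref{lemma3.2}) and a ``perturbation-of-the-data'' part (controlled by the optimality conditions and the $\omega$-independent continuity/coercivity of $A_h$ from Lemma \ref{lem3.1}).

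First I would derive \eqref{eq4.4}. Following Hinze's variational-discretization argument, I test the continuous optimality condition at $u=\bar u_h(\omega)$ and the discrete one at $u_h=\bar u(\omega)$, add the two resulting variational inequalities (here equalities, since the control is unconstrained and \eqref{OCPc}, \eqref{dOCPc} are pointwise identities), and obtain
\[
\alpha\|\bar u(\omega)-\bar u_h(\omega)\|_{0,\mathcal D(\omega)}^2
\le \big(\bar p(\omega)-\bar p_h(\omega),\,\bar u_h(\omega)-\bar u(\omega)\big)_{\mathcal D(\omega)}.
\]
I then rewrite the right-hand side using the state and adjoint equations: the difference $\bar y(\omega)-\bar y_h(\omega)$ is split through the auxiliary state $y^h(\omega)$, and the Galerkin orthogonality of \eqref{DOCPa}-\eqref{DOCPb} against \eqref{dOCPa}-\eqref{dOCPb} makes the ``discrete'' cross terms telescope, leaving only $\|\bar p(\omega)-p^h(\omega)\|_{0,\mathcal D(\omega)}$ and $\|\bar y(\omega)-y^h(\omega)\|_{0,\mathcal D(\omega)}$ on the right after a Young inequality to absorb the quadratic term $\|\bar y(\omega)-\bar y_h(\omega)\|_{0,\mathcal D(\omega)}^2$ into the left. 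This produces exactly the stated inequality with the stated constants $1/\alpha$ and $1$.

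Next, for \eqref{eq4.5} I write $\bar p(\omega)-\bar p_h(\omega)=(\bar p(\omega)-p^h(\omega))+(p^h(\omega)-\bar p_h(\omega))$; the second summand solves a discrete adjoint problem whose right-hand side is $\bar y(\omega)-\bar y_h(\omega)$, so the coercivity estimate of Lemma \ref{lem3.1} together with a duality (Aubin--Nitsche) argument on the $L^2$ level — exactly the mechanism that yields the $h^2$ rate in Lemma \ref{lemma3.2} — gives $\|p^h(\omega)-\bar p_h(\omega)\|_{0,\mathcal D(\omega)}\lesssim\|\bar y(\omega)-\bar y_h(\omega)\|_{0,\mathcal D(\omega)}$, hence \eqref{eq4.5}. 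The triple-norm estimates \eqref{eq4.6}-\eqref{eq4.7} are obtained the same way but without the duality step: for \eqref{eq4.6} one splits $\bar y(\omega)-\bar y_h(\omega)$ through $y^h(\omega)$, notes that $y^h(\omega)-\bar y_h(\omega)$ solves a discrete state equation with data $\bar u(\omega)-\bar u_h(\omega)$, and applies coercivity of $A_h$ in $\vertiii{\cdot}_h$ from Lemma \ref{lem3.1} followed by $\vertiii{\cdot}_*\lesssim\vertiii{\cdot}_h$; \eqref{eq4.7} is identical with the roles of state/adjoint and data $\bar y(\omega)-\bar y_h(\omega)$ interchanged.

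The main obstacle is bookkeeping rather than a genuine difficulty: one must be careful that $y^h(\omega)$ and $p^h(\omega)$ are defined with the \emph{continuous} optimal data (not the discrete optimal data), so that Lemma \ref{lemma3.2} applies verbatim, while the remaining pieces $y^h(\omega)-\bar y_h(\omega)$ and $p^h(\omega)-\bar p_h(\omega)$ satisfy \emph{clean} discrete equations whose data are exactly the control/state errors — this is what lets Lemma \ref{lem3.1} be used with an $h$-independent constant. A secondary point requiring care is the duality argument for \eqref{eq4.5}: since the discrete spaces $V^h(\omega)$ are only $H^1$-conforming on the cut mesh and the adjoint problem has mixed boundary conditions, one must invoke the $H^2$ elliptic-regularity shift of Theorem \ref{regularity} for the dual problem and the consistency of the Nitsche/ghost-penalty form to close the $O(h^2)$ estimate; all of this is already encapsulated in the cited results \cite[Corollary 9, Proposition 10]{BH2012}, so no new analysis is needed.
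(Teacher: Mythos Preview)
Your argument for \eqref{eq4.4}, \eqref{eq4.6}, and \eqref{eq4.7} is exactly the paper's: the same splitting through the auxiliary solutions $y^h(\omega),p^h(\omega)$, the same use of the difference equations
\[
A_h(\bar y_h-y^h,v_h)=(\bar u_h-\bar u,v_h)_{\mathcal D(\omega)},\qquad
A_h(v_h,\bar p_h-p^h)=(\bar y_h-\bar y,v_h)_{\mathcal D(\omega)},
\]
and the same coercivity/Young-inequality bookkeeping.

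The one point of divergence is \eqref{eq4.5}. The paper does \emph{not} use an Aubin--Nitsche duality argument there. Since $p^h(\omega)-\bar p_h(\omega)\in V^h(\omega)$, the paper simply tests the second difference equation above with $v_h=\bar p_h-p^h$, invokes coercivity of $A_h$ in $\vertiii{\cdot}_h$ from Lemma~\ref{lem3.1}, and uses the (implicit) discrete Poincar\'e inequality $\|v_h\|_{0,\mathcal D(\omega)}\lesssim\vertiii{v_h}_h$ (valid because $\Gamma_D(\omega)\neq\emptyset$ and $\vertiii{\cdot}_h$ contains the $h^{-1/2}$-weighted Dirichlet trace). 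Cauchy--Schwarz on the right-hand side and division then give $\|p^h-\bar p_h\|_{0,\mathcal D(\omega)}\lesssim\|\bar y-\bar y_h\|_{0,\mathcal D(\omega)}$ directly. Your duality route is not wrong, but it is more machinery than the situation calls for: Aubin--Nitsche is the tool for extracting an \emph{extra} power of $h$ from a FEM error, whereas here one only needs the elementary stability bound for a discrete problem with $L^2$ data, and no $h$-gain is claimed in \eqref{eq4.5}.
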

\begin{proof}
We note that the optimality conditions \eqref{OCPc}, \eqref{dOCPc} imply
\begin{equation*}
({\alpha(\bar{u}(\omega)-\bar{u}_h(\omega)) + \bar{p}(\omega)-\bar{p}_h(\omega), \bar{u}_h(\omega)-\bar{u}(\omega)})_{\mathcal{D}(\omega)} =0.
\end{equation*}
Hence, the expression
\begin{eqnarray}
\nonumber \alpha \|\bar{u}(\omega)-\bar{u}_h(\omega)\|_{0,\mathcal{D}(\omega)}^2 & = & ({\bar{p}(\omega)-\bar{p}_h(\omega), \bar{u}_h(\omega)-\bar{u}(\omega)})_{\mathcal{D}(\omega)}\\
\nonumber & = & ({\bar{p}(\omega)-p^h(\omega), \bar{u}_h(\omega)-\bar{u}(\omega)})_{\mathcal{D}(\omega)}\\ 
&   & + ({p^h(\omega)-\bar{p}_h(\omega), \bar{u}_h(\omega)-\bar{u}(\omega)})_{\mathcal{D}(\omega)},  \label{new}
\end{eqnarray}
follows, where the interjected term $p^h(\omega)\in { V^h(\omega)}$ is the CutFEM approximate solution of the auxiliary problem \eqref{DOCPb}.

We next estimate the last two terms separately. For the first one, the Cauchy-Schwarz and Young inequalities yield
\begin{equation}\label{eq4.11}
({\bar{p}(\omega)-p^h(\omega), \bar{u}_h(\omega)-\bar{u}(\omega)})_{\mathcal{D}(\omega)} \leq
\frac{1}{2\alpha}\|\bar{p}(\omega)-p^h(\omega)\|_{0,\mathcal{D}(\omega)}^2 + \frac{\alpha}{2}\|\bar{u}_h(\omega)-\bar{u}(\omega)\|_{0,\mathcal{D}(\omega)}^2. 
\end{equation}
For the second term, combining \eqref{DOCPa} 
with \eqref{dOCPa} and \eqref{DOCPb} with \eqref{dOCPb}, we obtain
\begin{eqnarray}
A_h(\bar{y}_h(\omega)-y^h(\omega), { v_h}) & = & ({\bar{u}_h(\omega)-\bar{u}(\omega), { v_h}})_{\mathcal{D}(\omega)} \quad \forall\,\, { v_h}\in { V^h(\omega)} \label{eq4.8}\\
A_h({v_h}, \bar{p}_h(\omega)-p^h(\omega)) & = & ({\bar{y}_h(\omega)-\bar{y}(\omega), { v_h}})_{\mathcal{D}(\omega)} \quad \forall\,\, {\ v_h}\in { V^h(\omega)}. \label{eq4.9}
\end{eqnarray} 
Testing \eqref{eq4.8} by $p^h(\omega)-\bar{p}_h(\omega)\in {V^h(\omega)}$, \eqref{eq4.9} by $\bar{y}_h(\omega)-y^h(\omega)\in {V^h(\omega)}$, 
adding and subtracting $\bar{y}(\omega)$ and using the polarization identity, we can conclude the bound 
\begin{eqnarray}
\nonumber ({p^h(\omega)-\bar{p}_h(\omega), \bar{u}_h(\omega)-\bar{u}(\omega)})_{\mathcal{D}(\omega)} & = & A_h(\bar{y}_h(\omega)-y^h(\omega), p^h(\omega)-\bar{p}_h(\omega)) \\
\nonumber & = & ({\bar{y}(\omega) - \bar{y}_h(\omega), \bar{y}_h(\omega)-y^h(\omega)})_{\mathcal{D}(\omega)} \\
& \leq & - \frac{1}{2}\|\bar{y}(\omega)- \bar{y}_h(\omega)\|_{0,\mathcal{D}(\omega)}^2 
+ \frac{1}{2}\|\bar{y}(\omega) -y^h(\omega)\|_{0,\mathcal{D}(\omega)}^2.  \label{eq4.12}
\end{eqnarray}
Thus, estimate \eqref{eq4.4} readily follows from \eqref{new}, \eqref{eq4.11} and \eqref{eq4.12}.

For \eqref{eq4.5}, we first consider the triangle inequality
\begin{equation}\label{eq4.13}
\|\bar{p}(\omega) -\bar{p}_h(\omega)\|_{0,\mathcal{D}(\omega)} \leq \|\bar{p}(\omega) -p^h(\omega)\|_{0,\mathcal{D}(\omega)}+\|\bar{p}_h(\omega) -p^h(\omega)\|_{0,\mathcal{D}(\omega)}.
\end{equation}
Since $\bar{p}_h(\omega) -p^h(\omega) \in { V^h(\omega)}$, using the coercivity property in 
Lemma \ref{lem3.1} and \eqref{eq4.9}, we have
\begin{eqnarray}
\nonumber \|\bar{p}_h(\omega) - p^h(\omega)\|_{0,\mathcal{D}(\omega)}^2 & \lesssim &
\vert\kern-0.25ex\vert\kern-0.25ex\vert \bar{p}_h(\omega)-p^h(\omega) \vert\kern-0.25ex\vert\kern-0.25ex\vert_{h}^2 \\
\nonumber  & \lesssim & A_h(\bar{p}_h(\omega) - p^h(\omega),\bar{p}_h(\omega) - p^h(\omega)) \\
\nonumber  & = & ({\bar{y}_h(\omega)-\bar{y}(\omega), \bar{p}_h(\omega) - p^h(\omega)})_{\mathcal{D}(\omega)} \\   
& \lesssim & \|\bar{y}_h(\omega)-\bar{y}(\omega)\|_{0,\mathcal{D}(\omega)}\, \|\bar{p}_h(\omega) - p^h(\omega)\|_{0,\mathcal{D}(\omega)} \label{eq4.14} 
\end{eqnarray}
Combining \eqref{eq4.13} with \eqref{eq4.14}, the estimate \eqref{eq4.5} holds.

For the estimate \eqref{eq4.6} we can consider the triangle inequality
\begin{equation}\label{eq4.15}
\left\vert\kern-0.25ex\left\vert\kern-0.25ex\left\vert{\bar{y}(\omega)-\bar{y}_h(\omega)}
\right\vert\kern-0.25ex\right\vert\kern-0.25ex\right\vert_{*}  \lesssim 
\vert\kern-0.25ex\vert\kern-0.25ex\vert \bar{y}(\omega)-y^h(\omega) \vert\kern-0.25ex\vert\kern-0.25ex\vert_{*}
+ \vert\kern-0.25ex\vert\kern-0.25ex\vert \bar{y}_h(\omega)-y^h(\omega) \vert\kern-0.25ex\vert\kern-0.25ex\vert_{h},
\end{equation}
since $\bar{y}_h(\omega) -y^h(\omega) \in { V^h(\omega)}$. 
Subsequently, using the coercivity property in Lemma \ref{lem3.1} and \eqref{eq4.8}, we have
\begin{eqnarray}
\nonumber \vert\kern-0.25ex\vert\kern-0.25ex\vert \bar{y}_h(\omega)-y^h(\omega) \vert\kern-0.25ex\vert\kern-0.25ex\vert_{h}^2 
& \lesssim & A_h(\bar{y}_h(\omega) - y^h(\omega),\bar{y}_h(\omega) - y^h(\omega)) \\
\nonumber  & = & ({\bar{u}_h(\omega)-\bar{u}(\omega), \bar{y}_h(\omega) - y^h(\omega)})_{\mathcal{D}(\omega)} \\   
\nonumber  & \lesssim & \|\bar{u}_h(\omega)-\bar{u}(\omega)\|_{0,\mathcal{D}(\omega)}\, \|\bar{y}_h(\omega) - y^h(\omega)\|_{0,\mathcal{D}(\omega)} \\
& \lesssim & \|\bar{u}_h(\omega)-\bar{u}(\omega)\|_{0,\mathcal{D}(\omega)}\, 
\vert\kern-0.25ex\vert\kern-0.25ex\vert \bar{y}_h(\omega)-y^h(\omega) \vert\kern-0.25ex\vert\kern-0.25ex\vert_{h}   \label{eq4.16}
\end{eqnarray}
Combining \eqref{eq4.15} with \eqref{eq4.16}, estimate \eqref{eq4.6} follows.
The proof of estimate \eqref{eq4.7} is similar.
\end{proof}
Theorem \ref{errors1} along with the a--priori bounds for the approximate solutions  $y^h(\omega)$ and $p^h(\omega)$ to the auxiliary problems \eqref{DOCPa}-\eqref{DOCPb} from Lemma \ref{lemma3.2} lead to the following error estimates:
\begin{theorem}\label{errors2}
Let $(\bar{y}(\omega), \bar{p}(\omega), \bar{u}(\omega))\in H^2(\mathcal{D}(\omega))\times H^2(\mathcal{D}(\omega))\times L^2(\mathcal{D}(\omega))$
and $(\bar{y}_h(\omega), \bar{p}_h(\omega), \bar{u}_h(\omega))\in { V^h(\omega)} \times {V^h(\omega)}\times L^2(\mathcal{D}(\omega))$ be the
solutions of the continuous optimal control problem \eqref{OCPa}-\eqref{OCPc} 
and its discrete counterpart \eqref{dOCPa}-\eqref{dOCPc}, respectively, for some fixed realization $\omega\in\Omega$. Then, we obtain
\begin{align*}
\|\bar{y}(\omega)-\bar{y}_h(\omega)\|_{0,\mathcal{D}(\omega)}+\|\bar{p}(\omega)-\bar{p}_h(\omega)\|_{0,\mathcal{D}(\omega)}  + \|\bar{u}(\omega)-\bar{u}_h(\omega)\|_{0,\mathcal{D}(\omega)} \\
\lesssim h^2 \Theta(\bar{u}(\omega), { y_d}, C_\omega)
\\
\left\vert\kern-0.25ex\left\vert\kern-0.25ex\left\vert{\bar{y}(\omega)-\bar{y}_h(\omega)}
\right\vert\kern-0.25ex\right\vert\kern-0.25ex\right\vert_{*} + 
\left\vert\kern-0.25ex\left\vert\kern-0.25ex\left\vert{\bar{p}(\omega)-\bar{p}_h(\omega)}
\right\vert\kern-0.25ex\right\vert\kern-0.25ex\right\vert_{*}  \lesssim  h\, \Theta(\bar{u}(\omega), { y_d}, C_\omega),
\end{align*}
where $\Theta(\bar{u}(\omega), {y_d}, C_\omega)=\|\bar{u}(\omega)\|_{0,\mathcal{D}(\omega)} + \|{\ y_d}\|_{0,\mathcal{D}(\omega)} + C_{\omega}({ f,g_D,g_N})$.
\end{theorem}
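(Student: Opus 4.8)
The plan is to combine the abstract error bounds of Theorem \ref{errors1} with the concrete CutFEM convergence rates of Lemma \ref{lemma3.2}, substituting the latter into the former and tracking constants. First I would record, from Lemma \ref{lemma3.2}, that the auxiliary errors satisfy $\|\bar{p}(\omega)-p^h(\omega)\|_{0,\mathcal{D}(\omega)}\lesssim h^2\,\Theta(\bar{u}(\omega),y_d,C_\omega)$ and $\|\bar{y}(\omega)-y^h(\omega)\|_{0,\mathcal{D}(\omega)}\lesssim h^2\,\Theta(\bar{u}(\omega),y_d,C_\omega)$, since each right-hand side in \eqref{lem4.2:c}--\eqref{lem4.2:d} is dominated by the single quantity $\Theta(\bar{u}(\omega),y_d,C_\omega)=\|\bar{u}(\omega)\|_{0,\mathcal{D}(\omega)}+\|y_d\|_{0,\mathcal{D}(\omega)}+C_{\omega}(f,g_D,g_N)$; likewise \eqref{lem4.2:a}--\eqref{lem4.2:b} give $\vertiii{\bar{y}(\omega)-y^h(\omega)}_{*}\lesssim h\,\Theta$ and $\vertiii{\bar{p}(\omega)-p^h(\omega)}_{*}\lesssim h\,\Theta$.

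Next I would insert these into estimate \eqref{eq4.4}: its right-hand side $\tfrac1\alpha\|\bar{p}(\omega)-p^h(\omega)\|_{0,\mathcal{D}(\omega)}^2+\|\bar{y}(\omega)-y^h(\omega)\|_{0,\mathcal{D}(\omega)}^2$ is then $\lesssim h^4\,\Theta^2$, so taking square roots yields $\|\bar{u}(\omega)-\bar{u}_h(\omega)\|_{0,\mathcal{D}(\omega)}\lesssim h^2\,\Theta$ and $\|\bar{y}(\omega)-\bar{y}_h(\omega)\|_{0,\mathcal{D}(\omega)}\lesssim h^2\,\Theta$, now with a constant also depending on $\alpha$. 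Feeding the former together with the $h^2$ bound on $\|\bar{p}(\omega)-p^h(\omega)\|_{0,\mathcal{D}(\omega)}$ into \eqref{eq4.5} gives $\|\bar{p}(\omega)-\bar{p}_h(\omega)\|_{0,\mathcal{D}(\omega)}\lesssim h^2\,\Theta$; summing the three bounds produces the first displayed estimate of the theorem.

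For the energy-norm estimate I would substitute $\vertiii{\bar{y}(\omega)-y^h(\omega)}_{*}\lesssim h\,\Theta$ and the just-established $\|\bar{u}(\omega)-\bar{u}_h(\omega)\|_{0,\mathcal{D}(\omega)}\lesssim h^2\,\Theta$ into \eqref{eq4.6}, using $h^2\lesssim h$ on a mesh of bounded size, to obtain $\vertiii{\bar{y}(\omega)-\bar{y}_h(\omega)}_{*}\lesssim h\,\Theta$. The analogous substitution of $\vertiii{\bar{p}(\omega)-p^h(\omega)}_{*}\lesssim h\,\Theta$ and $\|\bar{y}(\omega)-\bar{y}_h(\omega)\|_{0,\mathcal{D}(\omega)}\lesssim h^2\,\Theta$ into \eqref{eq4.7} gives $\vertiii{\bar{p}(\omega)-\bar{p}_h(\omega)}_{*}\lesssim h\,\Theta$, which completes the second estimate.

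Since every ingredient is already in hand, no serious difficulty arises; the only care needed is bookkeeping --- unifying the two slightly different right-hand sides of Lemma \ref{lemma3.2} into $\Theta(\bar{u}(\omega),y_d,C_\omega)$, tracking the $\alpha$-dependence introduced when taking square roots in \eqref{eq4.4}, and absorbing the $O(h^2)$ contributions into the $O(h)$ terms of the energy estimates. One should also keep in mind that all hidden constants inherit a dependence on $\mathcal{D}(\omega)$ through $C_1,C_2$ of Theorem \ref{regularity}, so the bounds are understood for each fixed realization $\omega\in\Omega$.
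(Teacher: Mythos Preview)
Your proposal is correct and mirrors the paper's own approach exactly: the paper does not provide an explicit proof of Theorem~\ref{errors2} but simply states that it follows by combining Theorem~\ref{errors1} with the a~priori bounds of Lemma~\ref{lemma3.2}, which is precisely what you spell out. One tiny wording slip: when invoking \eqref{eq4.5} you need the just-established bound on $\|\bar{y}(\omega)-\bar{y}_h(\omega)\|_{0,\mathcal{D}(\omega)}$ (the ``latter''), not on $\|\bar{u}(\omega)-\bar{u}_h(\omega)\|_{0,\mathcal{D}(\omega)}$, but the argument is unaffected.
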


 {We should remark that due to the definition  \eqref{star_norm} and the equations \eqref{OCPc}, \eqref{dOCPc}, Theorem \ref{errors2} readily yields the error estimates
\begin{align}
|\bar{y}(\omega)-\bar{y}_h(\omega)|_{1,\mathcal{D}(\omega)}+|\bar{p}(\omega)-\bar{p}_h(\omega)|_{1,\mathcal{D}(\omega)} 
& \lesssim
\left\vert\kern-0.25ex\left\vert\kern-0.25ex\left\vert{\bar{y}(\omega)-\bar{y}_h(\omega)}
\right\vert\kern-0.25ex\right\vert\kern-0.25ex\right\vert_{*} + 
\left\vert\kern-0.25ex\left\vert\kern-0.25ex\left\vert{\bar{p}(\omega)-\bar{p}_h(\omega)}
\right\vert\kern-0.25ex\right\vert\kern-0.25ex\right\vert_{*}  \lesssim  h\, \Theta(\bar{u}(\omega), { y_d}, C_\omega),
\label{L1-norm_state_error} \\
|\bar{u}(\omega)-\bar{u}_h(\omega)|_{1,\mathcal{D}(\omega)}
& \lesssim
\left\vert\kern-0.25ex\left\vert\kern-0.25ex\left\vert{\bar{u}(\omega)-\bar{u}_h(\omega)}
\right\vert\kern-0.25ex\right\vert\kern-0.25ex\right\vert_{*}  \lesssim  h\, \Theta(\bar{u}(\omega), { y_d}, C_\omega).
\label{L1-norm_control_error}
\end{align}
} 

Once the optimality system is discretized, a linear system of equations is obtained which will give the approximate solution to the mathematical model equations. For the non-deterministic approximation of the optimal solution in the probability domain $\Omega$ we will employ Quasi Monte Carlo quadrature to decrease the computational cost of a standard Monte Carlo sampling.  
\section{Monte Carlo simulations}\label{section5}
In robust optimal control problems with uncertainties we are interested in computing the first and second order statistical moments of our quantities of interest, that is, the expectation and the variance, {under the assumption that all uncertain parameters are independent and identically distributed (i.i.d.) uniforms in [0,1]}. Monte Carlo and Quasi Monte Carlo methods --between others-- can be used to approximate high dimensional integrals numerically and their basic aspects are illustrated in the next paragraphs.  
\subsection{Classical Monte Carlo simulation}
Monte Carlo method is based on the probabilistic interpretation of an integral that can be
expressed as the average or expectation of a real-valued function $h$ over the $s$-dimensional unit cube, namely, 
$I(h) =\mathbb{E}[h]= \int_{[0,1)^s}h(x)dx$. Then an empirical approximation to the expectation is given
by sample averaging  
\begin{equation}\label{average}
Q_N(h)=\frac{1}{N}\sum_{k=0}^{N-1}h(t_k),
\end{equation}
over the independent and uniformly distributed points $t_0, \ldots, t_{N-1}\in [0,1)^s$. A $2^8$-point set chosen randomly from the uniform distribution on $[0,1)^2$ is illustrated in Figure \ref{scatterplot}(a).  Apart from the ease of use, Monte Carlo method has the advantage of producing an unbiased estimate of the integral, i.e., $\mathbb{E}[Q_N(h)] = I(h)$ for any $N$. 
We recall that Monte Carlo integration converges with order of magnitude $\mathcal{O}(N^{-1/2})$ and although MC sampling comes as a natural choice, its very slow convergence leads to unaffordable computational cost when used in optimization algorithms that require iteration. This is the main motivation for switching to Quasi Monte Carlo methods to compute the statistical quantities involved in the current optimization problem.
\subsection{Quasi Monte Carlo methods}
Quasi Monte Carlo methods are a variant of ordinary Monte Carlo method that employ the same form (\ref{average}) in the approximation $\mathbb{E}[h]\approx Q_N(h)$, but guarantee small errors. The basic idea is to replace Monte Carlo's random samples by deterministic points selected in a way to cover $[0,1)^s$ more evenly. A deterministic QMC method with the same number of function evaluations achieves faster convergence --of order $\mathcal{O}(N^{-1})$-- than the classical MC method, thus, {provided some smooth dependence of the integrand on the parameters holds, QMC is superior than Monte Carlo.}

In order to obtain a family of well--distributed point sets in the unit cube  used for QMC integration, we choose \textit{rank-1 lattice rules}. The formula for the $k$-th point of an $N$-point rank-1 lattice rule with
generating vector $z\in \mathbb{Z}^s$ is given by
$\hat{t}_k = \left\{ \frac{kz}{N}\right\}=\left(\frac{kz}{N}\,\, \mathrm{mod}\,\, 1\right)$, $k=0, \ldots, N-1$,
in which the braces indicate that we take the fractional parts of each component in the vector. For a graphical demonstration of a two--dimensional lattice rule see Figure \ref{scatterplot}(b) and for further survey on QMC rules, see \cite{N1994,NC2006} and references therein.
A lattice rule can be turned into a randomized QMC method simply by shifting the lattice randomly, modulo one, with respect to each coordinate, that is
$
Q_N(h;\mathbf{\Delta})=\frac{1}{N}\sum_{k=0}^{N-1}h(\left\{\hat{t}_k+\mathbf{\Delta}\right\}),
$
where $\mathbf{\Delta}$ is a fixed random shift drawn from the uniform distribution on $[0,1)^s$. In this setting, we move all lattice points by the same amount, and we ``wrap" them back into the unit cube when necessary, to obtain shifted points in the unit cube. Figure \ref{scatterplot}(c) and Figure \ref{scatterplot}(d) illustrate the result of such a random shifting. To obtain uncorrelated estimators to $I(h)$ we take multiple independent random shifts $\mathbf{\Delta}_j\in [0,1)^s$, $j=0, \ldots, q-1$ for the same fixed lattice generating vector $z$ and we compute the average 
$Q_q(h)=\frac{1}{q}\sum_{j=0}^{q-1}Q_N(h;\mathbf{\Delta}_j)$.
In this case, the total number of evaluations of the integral is $qN$. Randomly shifted lattice rules maintain the fast convergence rate of a deterministic QMC method and provide an unbiased estimate to the integral approximation. Then a practical estimate of the Root--Mean--Square error of Quasi Monte Carlo method is calculated by
\begin{equation}\label{variance_shifted}
\textrm{Root-Mean-Square Error}\,\, {\approx} \,\, 
\Big(\frac{1}{{q-1}}\sum_{j=0}^{q-1}(Q_N(h;\mathbf{\Delta}_j)-Q_q(h))^2\Big)^{1/2}.
\end{equation}
\begin{figure}[h]
\centering
\begin{tabular}{cccc}
\subfigure[]{
\includegraphics[width=0.22\textwidth]{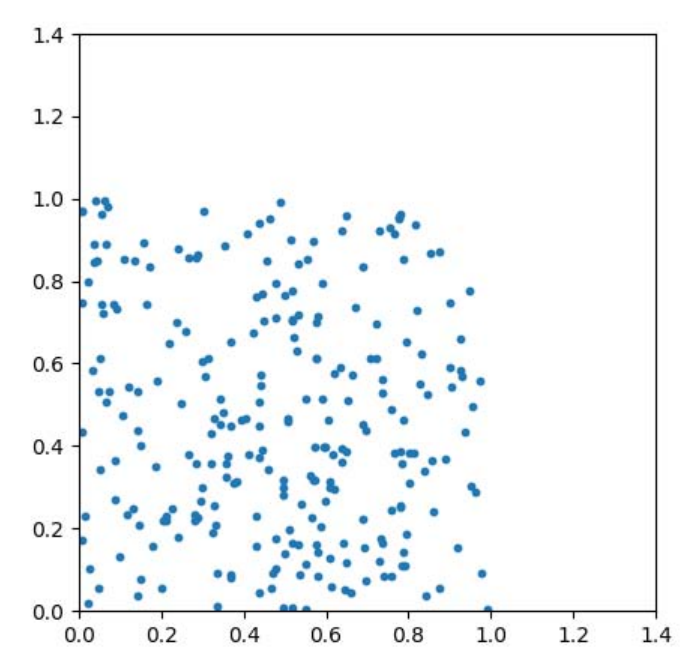}}\!\!\!\!\!
\subfigure[]{
\includegraphics[width=0.22\textwidth]{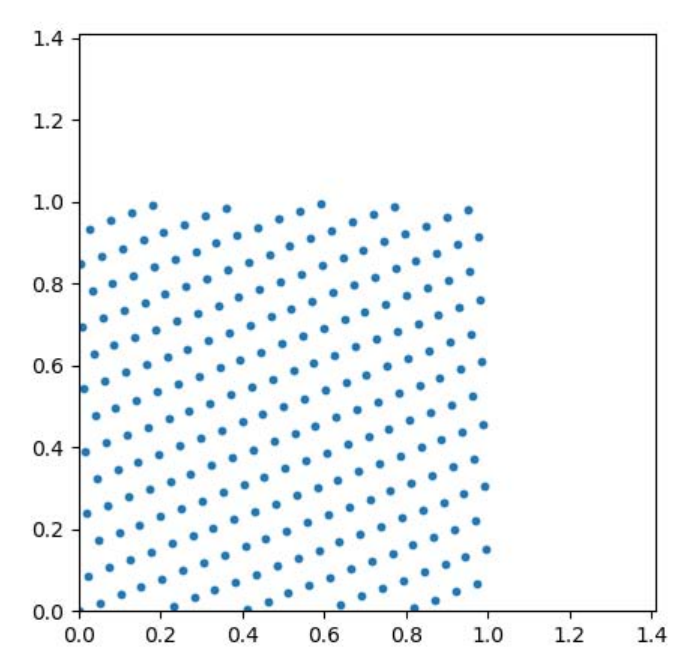}}\!\!\!\!\!
\subfigure[]{
\includegraphics[width=0.22\textwidth]{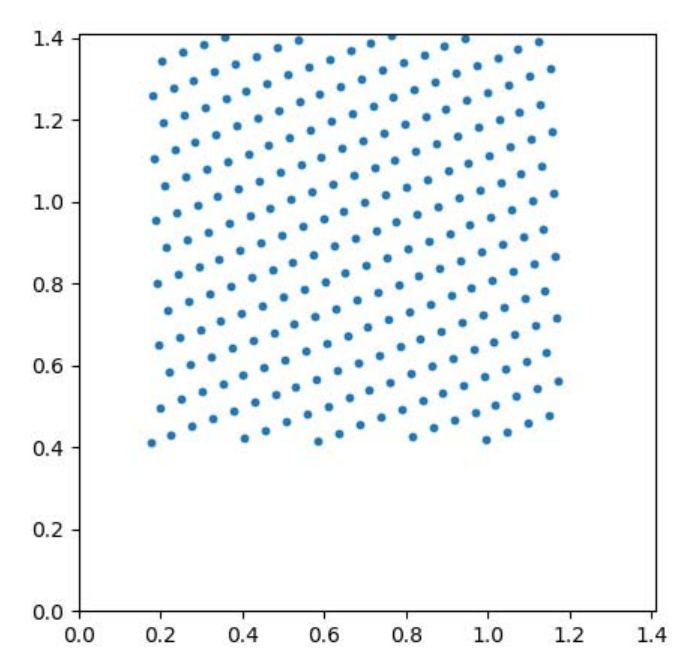}}\!\!\!\!\!
\subfigure[]{
\includegraphics[width=0.22\textwidth]{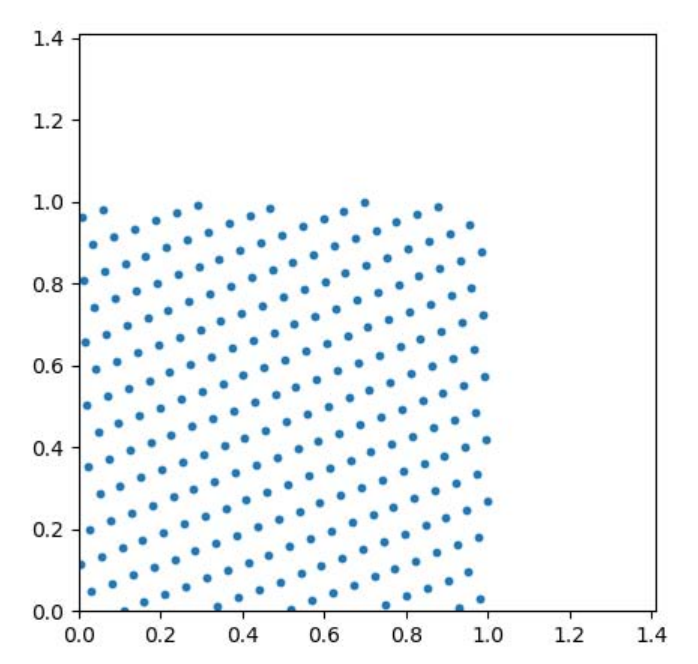}}
\end{tabular}
\caption{Illustration of a $2^{8}$-point set in the unit cube: (a) Monte Carlo method sampling randomly generated, (b) a lattice rule with generating vector $z=(1,127)$, (c) randomly-shifted lattice rule, (d) wrapped back inside the unit cube.}\label{scatterplot}
\end{figure}
In the following we describe an indirect method to approximate sequentially  the triple $(\bar{y}_h(\omega),\bar{p}_h(\omega),\bar{u}_h(\omega))$ in the physical domain $\mathcal{D}$ and we examine some preconditioning techniques. The latter are addressed by using iterative preconditioned Krylov subspace solvers on the deterministic state and adjoint equations while the computation of the deterministic control is achieved through a steepest descent method.
\section{Numerical solution and preconditioning techniques}
\label{section6}
It is well--known that the discrete optimality system in Lemma \ref{discrete} may equivalently be formulated as saddle--point problem. Indeed, denoting ${V^h(\omega)} =\textrm{span}\{\psi_1, \ldots, \psi_n\}$, where $\psi_j$, $j=1, \ldots, n$ are the finite element basis functions associated with the nodes of the triangulation $\mathcal{T}_h$, (\ref{dOCPa})-(\ref{dOCPc}) are rewritten in an algebraic level as a sparse linear system.
Hence taking the isomorphism $P_h: \mathbb{R}^{n}\to { V^h(\omega)}$ between the space of coefficients  and the space of finite element functions, the discrete triple $(\bar{y}_h(\omega), \bar{p}_h(\omega), \bar{u}_h(\omega))\in { V^h(\omega)}\times {V^h(\omega)}\times { V^h(\omega)}$ can be represented by
\begin{equation*}
\bar{y}_h(\omega) = 
P_h\mathbf{y}(\omega), \quad
\bar{p}_h(\omega) = 
P_h\mathbf{p}(\omega), \quad
\bar{u}_h(\omega) = 
P_h\mathbf{u}(\omega),
\end{equation*}
where $\mathbf{y}(\omega), \mathbf{p}(\omega), \mathbf{u}(\omega)\in\mathbb{R}^n$ are the vectors of coefficients. Inserting the above expressions into the system of equations (\ref{DOCP})
and letting $v_h = \psi_j$ for all $j=1, \ldots, n$, the discrete optimality system is described by the system
\begin{equation}\label{system1}
\begin{array}{ccccc}
K\mathbf{y}(\omega) & - & M\mathbf{u}(\omega) & = & \mathbf{d}(\omega) \\ 
K\mathbf{p}(\omega) & - & M\mathbf{y}(\omega) & = & \mathbf{b}(\omega) \\
\alpha M\mathbf{u}(\omega) & + & M\mathbf{p}(\omega) & = & 0, 
\end{array}
\end{equation}
\[
\text{where}\quad
M= \begin{bmatrix}
M_{ij}
\end{bmatrix}=
\begin{bmatrix}
({\psi_j, \psi_i})_{\mathcal{D}(\omega)}
\end{bmatrix}\in\mathbb{R}^{n\times n}
\,\,\,\textrm{and}\,\,\,
K = \begin{bmatrix}
K_{ij}
\end{bmatrix} = 
\begin{bmatrix}
A_h(\psi_j, \psi_i) 
\end{bmatrix}\in\mathbb{R}^{n\times n} 
\]
are the \emph{mass} matrix and  the \emph{stiffness} matrix corresponding to the CutFEM discretization,
respectively. Furthermore, the right--hand side vectors are given by
\[
\mathbf{b}(\omega)=\begin{bmatrix} -({{ y_d}, \psi_i})_{\mathcal{D}(\omega)} \end{bmatrix}\in\mathbb{R}^{n}
\,\,\,\textrm{and}\,\,\,
\mathbf{d}(\omega)=\begin{bmatrix} L_h(\psi_i) \end{bmatrix}\in\mathbb{R}^{n}.
\]
The system \eqref{system1} can be further reduced if we substitute the control by means of 
the last equation, i.e., $\mathbf{u}(\omega)=-\alpha^{-1}\mathbf{p}(\omega)$. Hence,
\begin{equation}\label{system2}
\mathcal{A}:=\begin{bmatrix}
K & \alpha^{-1}M \\
-M &  K 
\end{bmatrix}
\begin{bmatrix}
\mathbf{y}(\omega)\\ \mathbf{p}(\omega)
\end{bmatrix}=
\begin{bmatrix}
\mathbf{d}(\omega)\\ \mathbf{b}(\omega)
\end{bmatrix}.
\end{equation}
Systems of the latter type are typically very poorly conditioned, so classical
iterative solvers need to be coupled with a preconditioner, in order to achieve
fast convergence \cite{S2003}. Hence, we investigate the performance of preconditioning on \eqref{system2}, emphasizing on a block diagonal structure \cite{RDW2010}.
Let the block diagonal $(BD)$ matrix
$\mathcal{P}_{BD}=\mathrm{diag}(P_1,P_2)=\begin{bmatrix}
P_1 & 0 \\
0 & P_2 \\
\end{bmatrix}\in\mathbb{R}^{2n\times 2n},
$
where $P_{1}$, $P_{2}\in\mathbb{R}^{n\times n}$ are symmetric positive definite matrices which are
spectrally equivalent to the stiffness matrix $K\in\mathbb{R}^{n\times n}$. 
Applying $\mathcal{P}_{BD}^{-1}$ to the left of the matrix 
$\mathcal{A}$ in \eqref{system2}, we have
\[
\widehat{\mathcal{A}}=\mathcal{P}_{BD}^{-1}\mathcal{A}=
\begin{bmatrix}
P_1^{-1}K    & \alpha^{-1}P_1^{-1}M \\
-P_{2}^{-1}M & P_{2}^{-1}K \\
\end{bmatrix},
\]
which leads to the preconditioned linear system 
\begin{subequations}\label{Psystem}
\begin{eqnarray}
P_1^{-1}K\mathbf{y}(\omega)   & = & P_1^{-1}(M\mathbf{u}(\omega) + \mathbf{d}(\omega)) \label{Psystem:a}\\
P_{2}^{-1}K\mathbf{p}(\omega) & = & P_{2}^{-1}(M\mathbf{y}(\omega) + \mathbf{b}(\omega)). \label{Psystem:b}
\end{eqnarray}
\end{subequations}
In view of the spectral equivalence of $P_1$ and $P_2$, note
\[
\beta x_1^T P_1 x_1 \leq x_1^T K x_1 \leq \delta x_1^T P_1 x_1, \,\,\,\textrm{and}\,\,\,
\eta x_2^T P_{2} x_2 \leq x_2^T K x_2 \leq \theta x_2^T P_{2} x_2,
\]
for all $x_1, x_2\in \mathbb{R}^{n}\setminus\{0\}$ and some positive constants $\beta, \delta, \eta, \theta$. For $x=\begin{bmatrix}
x_1 \\ x_2 \end{bmatrix}$, it is readily verified that
\[
\min\{\beta,\eta\}  
\leq \frac{x^T \mathrm{diag}(K, K)x}{x^T\mathcal{P}_{BD} x} \leq 
\max\{\delta,\theta\}.
\]
The spectral condition number $\kappa(P_{BD}^{-1}\mathcal{A})$ is defined by the quotient of the largest and smallest eigenvalues of $P_{BD}^{-1}\mathcal{A}$ and it satisfies
\[
\kappa(P_{BD}^{-1}\mathcal{A})\leq \frac{\max\{\delta, \theta\}}{\min\{\eta,\theta\}}.
\]
Thus it is sufficient to find good preconditioners for the symmetric and positive definite stiffness matrix $K$, derived from the discretization scheme. In this case, well known Krylov subspace methods such as the Conjugate Gradient (CG) algorithm can be employed for the numerical implementation of \eqref{Psystem:a}, \eqref{Psystem:b}. 
It is also known that the condition number of $K$ is uniformly bounded with an upper bound 
of order $\mathcal{O}(h^{-2})$ independently of the location of the boundary relative to the background mesh (see \cite[Lemma 11]{BH2012}). 
This is due to ghost penalty stabilization term in cut finite elements that yields robust conditioning of the stiffness matrix with respect to the position of the boundary.

The size of the discretized optimality system  \eqref{Psystem:a}-\eqref{Psystem:b} is often too large and sparse for practical solution. Thus, instead of solving it directly as a single coupled system, an indirect algorithm will be used for its deterministic implementation, see Algorithm \ref{alg:seq1}. This algorithm is based on the gradient descent method to approximate the control variable that minimizes the cost functional, with descent direction given by the negative gradient $-\nabla J_h'$. It requires several evaluations -- iterations\footnote{Preconditioning techniques for such iterative methods have been extensively studied, see e.g. \cite{Trol_reaction10}, 
and references therein, and they will not be examined in the present work.} -- for the control with respect to an optimal step size parameter chosen in a selected search direction. Further, for every such evaluation one needs to solve sequentially the linear systems \eqref{Psystem:a} and  \eqref{Psystem:b} arising from the deterministic state and adjoint variational forms. The basic steps of the deterministic optimization algorithm are outlined as follows:

\begin{algorithm}
\caption{Optimization approach with preconditioned CG}\label{alg:seq1}
\begin{algorithmic}
\STATE Set the data $y_d$, $f$, $g$ for fixed parameter $\omega$ and give tolerance $\epsilon$.\\
\STATE Initialize iteration counter $k=0$, control $u_h=u_h^{(0)}$. \\
\WHILE{$k =1, 2, \ldots$ }
\STATE Solve (\ref{Psystem:a}) for state $y^{(k)}$ with given $u^{(k)}$  
using CG preconditioned by $P_1$. \\
\STATE Solve (\ref{Psystem:b}) for adjoint $p^{(k)}$ with computed $y^{(k)}$ 
using CG preconditioned by $P_2$. \\
\IF{$|J_h(u^{(k)})-J_h(u^{(k-1)})|/J_h(u^{(k)})\leq \epsilon$}
\STATE break
\ELSE
\STATE update $u_{h}^{(k+1)}=u_{h}^{(k)}-\tau \nabla J_h(u^{(k)})$, with appropriate step size parameter $\tau>0$ \\
\ENDIF
\ENDWHILE
\end{algorithmic}
\end{algorithm}

The question now posed is which preconditioner is optimal in the sense of the number of iterations required for convergence 
and what is the dependency on the mesh size. Some basic techniques of computational interest are examined in the following subsections.

\subsection{Jacobi and Symmetrized Gauss-Seidel}
Let $K=L+D+L^T$ be a matrix splitting of the stiffness matrix $K$, 
with $D$ its diagonal and $L$ its strictly lower triangular part. 
The Jacobi and Gauss-Seidel are two basic preconditioners, which 
are formed by the iteration matrices  based on stationary 
iterative methods, $K_J=D$ and $K_{GS}=D+L$, {respectively}.
However, $K_{GS}$ is not symmetric, which means it is not appropriate for the CG algorithm. 
Therefore, {combining} a forward sweep using the lower triangular component $D+L$
of $K$ {and} a backward sweep using the upper component $D+L^T$ of $K$,
the symmetric (and positive definite) Gauss-Seidel preconditioner 
$
K_{SGS}=(D+L)D^{-1}(D+L^T)
$
is obtained.
In section \ref{section7}, we test the performance of $K_J$ and $K_{SGS}$ using Algorithm \ref{alg:seq1} for the considered elliptic optimal control model problem solved by CutFEM, and we set $P_1=K_J$ in \eqref{Psystem:a} and $P_2=K_{SGS}$ in \eqref{Psystem:b}.  
Although, a combined use of both direct and iterative method can be competitive in elimi\-nating the ill-conditioning of large and sparse matrices. This is possible, e.g., by employing  multigrid preconditioning techniques.

\subsection{Unfitted mesh multigrid}
In this subsection we refer to geometric multigrid methods. A characteristic feature of a multilevel scheme is the generation of a mesh hierarchy with nested finite element basis functions starting from coarsest to finest levels. Initially a smoothing process is performed on the coarse level eliminating the error components of high frequency and then a prolongation operator is used to transfer information from the coarser to the finer grid. This procedure is repeated until a desired level of refinement will have been reached.

In \cite{LGR2018} the authors introduce a multigrid scheme especially designed for \emph{unfitted mesh} finite element discretizations and they investigate its performance as a solver, as well as, a preconditioner to an elliptic interface boundary value problem. Our goal is to adopt this unfitted mesh multigrid and to investigate its performance as a preconditioner to solve an optimal control problem discretized by a cut elements finite element method. 

We consider a sequence of grid sizes $\{h_{\ell} \}_{\ell\geq 0}$,  which generates a hierarchy of nested  {quasi-uniform} triangulations $\{\mathcal{T}_\ell \}_{\ell\geq0}$, from coarsest to finest meshes. This triangulation procedure, contains the original domain $\mathcal{D}(\omega)$ and the mesh is not fitted to the boundary $\Gamma(\omega)$. Since $\Gamma(\omega)$ is characterized by a zero level set function, it is approximated in a piecewise linear way, $\Gamma_{\ell}(\omega)$, with respect to the finite element basis in $\mathcal{T}_{\ell}$. Then it is evident that $\Gamma_{\ell-1}(\omega)\neq\Gamma_{\ell}(\omega)$. Nevertheless, if we consider the extended domains $\mathcal{D}_{\mathcal{T}_{\ell}}$ covered by the active part of the triangulations $\{K\in \mathcal{T}_{\ell}: K\cap\mathcal{D}(\omega)\neq\emptyset\}$, then the coarse level elements intersected by $\Gamma_{\ell}(\omega)$ are also intersected by $\Gamma_{\ell-1}(\omega)$.
This fact leads to a straightforward generalization of this unfitted mesh multigrid method to the finite element spaces
\begin{equation*}\label{cutspace}
V_\ell=\left \{  \upsilon \in C^0(\bar {\mathcal{D}}_{T_{\ell}})\,:\,\upsilon |_K  \in P^1(K), \, \forall K\in \mathcal{T}_\ell \right \}.
\end{equation*}
The underlying finite element spaces, form a hierarchy of  embedded spaces due to $v|_{\bar {\mathcal{D}}_{T_{\ell}}}\in V_{\ell}$ for any $v\in V_{\ell-1}$. Thus the prolongation operator is naturally induced in the spaces of coefficients 
\[
R: \mathbb{R}^{n_{\ell-1}}\xrightarrow{P_{\ell-1}} V_{\ell-1}\subset V_{\ell}\xrightarrow{P_{\ell}^{-1}}\mathbb{R}^{n_{\ell}},
\]
where $P_\ell: \mathbb{R}^{n_\ell}\to V_\ell$, $n_\ell=\dim V_\ell$ is the isomorphism between the spaces of coefficients  and the spaces of finite element functions. The stiffness matrix at the $(\ell-1)$-level is defined as $K_{\ell-1}=R^*K_{\ell}R$,
where $R^*$ is the canonical restriction.

In this unfitted mesh multigrid scheme an effective smoother is also addressed for unfitted mesh discretizations. The proposed smoothing procedure is called ``Gauss-Seidel with interface correction", since it is based on a standard Gauss-Seidel iteration on the entire computational domain accompanied by a local correction on the boundary zone (see Algorithm 1 in \cite{LGR2018}). In our fictitious domain method this local correction is applied on the unknowns corresponding to the vertices of cut elements.

For our deterministic experiments we conduct one multigrid V-cycle as an inner iteration inside CG method, see Algorithm \ref{alg:seq2}. This approach follows the ideas of Algorithm \ref{alg:seq1} adapted on the family of stiffness matrices associated to the linear system \eqref{Psystem:a}, \eqref{Psystem:b} for a sequence of nested grids.
\begin{algorithm}
\caption{Optimization approach with multigrid V-cycle  preconditioner}\label{alg:seq2}
\begin{algorithmic}
\STATE Set data $y_d$, $f, g$, for fixed $\omega$, levels of refinement $l$ and tolerance $\epsilon$. \\ Iteration counter $k=0$. \\
\WHILE{$k =1, \ldots$ }
\FOR{$\ell=0, 1, \ldots, l$} 
\IF{$\ell=0$}
\STATE Define coarse FEM space $V_0$ and set $u_0^{(k)}=u_0^{(k-1)}\in V_0$.
\STATE Solve \eqref{Psystem:a}, \eqref{Psystem:b} using sparse direct solver. \\
\ELSE
\STATE Define finer FEM space $V_\ell$ and set $u_{\ell}^{(k)}=u_{\ell}^{(k-1)}\in V_\ell$.\\
\STATE Solve \eqref{Psystem:a}, \eqref{Psystem:b} using CG preconditioned by  multigrid V-cycle with one pre- and post-smoothing steps.
\ENDIF
\IF{$|J_\ell(u_{\ell}^{(k)})-J_\ell(u_{\ell}^{(k-1)})|/J_\ell(u_{\ell}^{(k)})\leq \epsilon$}
\STATE break
\ELSE
\STATE Update $u_{\ell}^{(k+1)}=u_{\ell}^{(k)}-\tau \nabla J_\ell(u^{(k)})$, with appropriate parameter $\tau>0$ \\
\ENDIF 
\ENDFOR 
\ENDWHILE
\end{algorithmic}
\end{algorithm}
%
\section{Examples}\label{section7}

We firstly validate the theoretical error estimates in Theorem \ref{errors2} using a deterministic numerical example with manufactured exact solutions to compute the errors for the state, the adjoint and the control approximations on a sequence of refined meshes. We also test the performance of the preconditioners discussed in section \ref{section6} in two and three-dimensional deterministic problems. To this end, we consider a fixed realization $\omega\in\Omega$ and we examine the problem formulated in \eqref{cost}-\eqref{mixedBVP} with \emph{Dirichlet} boundary conditions imposed on $\Gamma=\partial\mathcal{D}$, i.e. $g_D\equiv g$ and  $\Gamma_N=\emptyset$. Later on, we carry out a non-deterministic version of the optimal control problem to illustrate the computational efficiency of Quasi Monte Carlo method. In all cases we set the regularization parameter $\alpha=0.1$, the Nitsche stabilization $\gamma_D =10$ and penalty parameter $\gamma_1 = 0.1$. The experiments have been tested on Aris HPC system with Intel Xeon E5-4650v2 and 496 GB of RAM in a python environment using the open--source Netgen/NGSolve \cite{Scho14,ngsolve}, and ngsxfem \cite{LeHePreWa21,ngsxfem} finite element software.

\subsection{Deterministic case}
In each of the following examples, and in the spirit of the fictitious domain approach, the original spatial domain $\mathcal{D}$ is immersed into the domain $\widetilde{\mathcal{D}}$. 
We consider the spaces $V_\ell$ spanned by continuous, piecewise linear finite element basis functions on a sequence of regular, simplicial meshes in $\bar{\mathcal{D}}_{\mathcal{T}_{\ell}}$ obtained from an initial, regular triangulation of $\widetilde{\mathcal{D}}$ by recursive, uniform bisection of simplices. To find $\bar{u}_\ell$ that minimizes the cost functional $J_\ell(u_\ell)$ we use Algorithm \ref{alg:seq1} in which several evaluations of $y_\ell$, $p_\ell$ are performed. For each evaluation we solve \eqref{Psystem:a} for the state and \eqref{Psystem:b} for the adjoint  with CG method accelerated by three types of preconditioners, as described in section \ref{section6}.
Subsequently, we demonstrate the spectral condition numbers of the corresponding preconditioned matrices and the number of iterations until a residual error of order $10^{-8}$ will have been reached. The first example examines and confirms the convergence rates of the CutFEM method, computed by the ratio
$
\mathrm{EOC}=\frac{\log(\|e^{(\ell-1)}\|)-\log(\|e^{(\ell)}\|)}{\log(2)},
$
where $\|e^{(\ell)}\|$ denotes the error at level $\ell$ between the exact optimal solution and the cut finite elements approximation with respect to a given norm. 

\begin{example}\label{ex1}
Let $\widetilde{\mathcal{D}}=[-1.5, 1.5]^2$ and the interface described by the unit circle. For the numerical evaluation of errors, 
we choose the manufactured optimal solution 
\begin{equation*}
(\bar{y}, \bar{p}, \bar{u})=
(\sin(0.5\pi x_1)\sin(0.5\pi x_2),
-0.1 (x_1^2+x_2^2-1)\sin(0.5\pi x_1),
(x_1^2+x_2^2-1)\sin(0.5\pi x_1)),
\end{equation*}
with desired and source functions given by
\begin{align*}
y_d(x_1,x_2) & =  
0.025\Big[\big(\pi^2(x_1^2+x_2^2-1)-16\big)\sin(0.5\pi x_1)
-8\pi x_1\cos(0.5\pi x_1)\Big] + \sin(0.5\pi x_1) \sin(0.5\pi x_2),
\\ 
f(x_1,x_2) & =  0.5\pi^2\sin(0.5\pi x_1) \sin(0.5\pi x_2) - 
(x_1^2+x_2^2-1)\sin(0.5\pi x_1),
\end{align*}
respectively, such that they verify the control system 
with non homogeneous Dirichlet boundary conditions. 

The errors with respect to the $L^2(\mathcal{D})$  {norm} and $H^1(\mathcal{D})$  {semi-norm} for the domain $\mathcal{D}$ computed by a direct sparse solver are visualized in Table \ref{table1} and Table \ref{table2}, respectively. 
Obviously, the observed rates adhere nicely with the theoretical optimal way  that the finite elements method converges, as proved in Theorem \ref{errors2}  {and error estimates \eqref{L1-norm_state_error}, \eqref{L1-norm_control_error}}. In particular, they confirm the second order and the first order accuracy with respect to the $L^2(\mathcal{D})$  {norm} and $H^1(\mathcal{D})$  {semi-norm}, respectively.

\begin{table}[htbp]
\centering
\begin{tabular}{ccccccc}
\toprule
$h_{\max}$ & $\|\bar{y}-\bar{y}_h\|_{0,\mathcal{D}}$ & EOC & $\|\bar{p}-\bar{p}_h\|_{0,\mathcal{D}}$ & EOC & $\|\bar{u}-\bar{u}_h\|_{0,\mathcal{D}}$ & EOC  \\
\midrule
$2^{-2}$ & 0.914502e-2 &      & 0.289888e-2 &      & 0.289888e-1 &   \\
$2^{-3}$ & 0.263861e-2 & 1.79 & 0.073717e-2 & 1.98 & 0.073717e-1 & 1.98 \\
$2^{-4}$ & 0.057354e-2 & 2.20 & 0.017087e-2 & 2.11 & 0.017087e-1 & 2.11 \\
$2^{-5}$ & 0.013884e-2 & 2.05 & 0.004151e-2 & 2.04 & 0.004151e-1 & 2.04 \\
$2^{-6}$ & 0.003512e-2 & 1.98 & 0.001015e-2 & 2.03 & 0.001015e-1 & 2.03 \\ 
$2^{-7}$ & 0.000881e-2 & 2.00 & 0.000250e-2 & 2.02 & 0.000250e-1 & 2.02 \\
$2^{-8}$ & 0.000217e-2 & 2.02 & 0.000062e-2 & 2.01 & 0.000062e-1 & 2.01 \\
\midrule
Mean     &             & 2.01 &            & 2.03 &            & 2.03 \\
\bottomrule
\end{tabular}
\medskip
\caption{Test \ref{ex1}: $L^2(\mathcal{D})$ errors and experimental order of convergence for the computed state $\bar{y}_h$, adjoint state $\bar{p}_h$ and control $\bar{u}_h$ without preconditioning.}\label{table1}
\end{table}
\begin{table}[htbp]
\centering
\begin{tabular}{ccccccc}
\toprule
$h_{\max}$ & $|\bar{y}-\bar{y}_h |_{1,\mathcal{D}}$ & EOC & $|\bar{p}-\bar{p}_h |_{1,\mathcal{D}}$ & EOC 
& $|\bar{u}-\bar{u}_h |_{1,\mathcal{D}}$ & EOC  \\
\midrule
$2^{-2}$ & 0.271164 &      & 0.426922e-1 &      & 0.426922 &   \\
$2^{-3}$ & 0.145505 & 0.90 & 0.222987e-1 & 0.94 & 0.222986 & 0.94 \\
$2^{-4}$ & 0.069620 & 1.06 & 0.110154e-1 & 1.02 & 0.110154 & 1.02 \\
$2^{-5}$ & 0.034615 & 1.01 & 0.055082e-1 & 1.00 & 0.055082 & 1.00 \\
$2^{-6}$ & 0.017446 & 0.99 & 0.027457e-1 & 1.01 & 0.027457 & 1.01 \\
$2^{-7}$ & 0.008728 & 1.00 & 0.001371e-1 & 1.00 & 0.013709 & 1.00 \\
$2^{-8}$ & 0.004341 & 1.01 & 0.006848e-1 & 1.00 & 0.006849 & 1.00 \\
\midrule
Mean     &         & 1.00 &           & 1.00  &     & 1.00 \\
\bottomrule
\end{tabular}
\medskip
\caption{Test \ref{ex1}: $H^1(\mathcal{D})$ errors and experimental order of convergence for the computed state $\bar{y}_h$, adjoint state $\bar{p}_h$ and control $\bar{u}_h$ without  preconditioning.}\label{table2}
\end{table}

The effect of preconditioning in accordance with the grid size is reported in Table \ref{table3}. We start from a coarse level $\ell=0$ and we use a mesh bisection algorithm to generate a sequence of meshes. Then, on each level of refinement we compute the spectral condition numbers of the original stiffness matrix $K$  and later after diagonal scaling ($K_J$) and preconditioned by the symmetrized variant of Gauss-Seidel ($K_{SGS}$) and the multilevel scheme ($K_{MG}$), as described in section \ref{section6}. 
All types of preconditioners are applied on CG to solve the state and adjoint equations, respectively (see Algorithms \ref{alg:seq1} and \ref{alg:seq2}), which terminates once the residual norm has reached the value $10^{-8}$. One V-cycle of multigrid method is applied as a preconditioner inside CG with one step of Gauss-Seidel pre-smoothing and post-smoothing on the computational domain followed by a correction smoother for the degrees of freedom on the cut elements.  

As observed, CG is prohibitive without the effect of preconditioning, due to the extremely slow convergence at high levels of refinement where $\kappa(K)$ grows drastically. Although diagonal scaling makes the situation better, it still gives high iteration counts at finer resolutions. The symmetrized version of Gauss-Seidel performs better than Jacobi, but overall CG preconditioned with multigrid significantly prevails. The multilevel scheme needs much fewer iterations to reach the desired solution accuracy independently of the levels. Moreover, at the finest level the spectral condition number of the preconditioned system matrix is close to 2.2 and the number of CG iterations are nearly constant. Therefore, using a multigrid method as a preconditioner inside CG method can be advantageous and sometimes more convenient than acting
as a solver. 

\begin{table}[htbp]
\centering
\resizebox{12cm}{!}
{
\medskip
\begin{tabular}{c|c|cc|cc|cc|cc}
	\toprule
	$\ell$  & dofs & $\kappa(K)$ & iter & $\kappa(K_J^{-1}K)$ & iter
	& $\kappa(K_{SGS}^{-1}K)$ & iter & $\kappa(K_{MG}^{-1}K)$ & iter\\
	\hline\midrule
	1 & 1159 & 870.96  & 147 &  616.64& 122 & 124.94 & 61  & 1.89 & 11 \\ 
	2 & 4402 & 3248.77 & 283 & 2271.04& 236 & 508.82 & 123 & 2.15 & 11\\ 
	3 & 17104& 10404.11& 576 & 6160.87& 458 &1930.91 & 245 & 2.16 & 12\\ 
	4 & 67461& 22183.03&1097 &13960.18& 877 &5437.50 & 502 & 2.20 & 12 \\ 
	\bottomrule
\end{tabular}}
\medskip
\caption{Test \ref{ex1}: Spectral condition numbers for the stiffness matrix $K$  (third co\-lumn) and the corresponding preconditioned stiffness matrices  with Jacobi $K_J$ (fifth column), symmetrized Gauss-Seidel $K_{SGS}$ (seventh column) and multigrid V-cycle (ninth column)  for various levels $\ell$. CG termination counts are also provided until residual norm has reached the value $10^{-8}$.}\label{table3}
\end{table}
\end{example}

\begin{example}\label{ex2}
In this case we examine a three-dimensional domain extending the model problem of the previous  example. For this context, let the cube $\widetilde{\mathcal{D}}=[-1.1, 1.1]^3$ encompassing a 
spherical domain centered at the origin whose boundary is the unit sphere. The source term and the desired state are given by 
\begin{align*}
f(x_1,x_2,x_3) & =  0.75\sin(0.5\pi x_1)\sin(0.5\pi x_2)\sin(0.5\pi x_3)
+ (1-x_1^2-x_2^2-x_3^2)\sin(0.5\pi x_1),\\
y_d(x_1,x_2,x_3) & =  \sin(0.5\pi x_1)\sin(0.5\pi x_2)\sin(0.5\pi x_3) -
0.025(1-x_1^2-x_2^2-x_3^2)\sin(0.5\pi x_1) \\
& \qquad\qquad\qquad\qquad\qquad\qquad\qquad\qquad
-0.6\sin(0.5\pi x_1)-0.2\pi x_1\cos(0.5\pi x_1),
\end{align*}
respectively, with a representative computed optimal solution illustrated in Figure \ref{image2}.

\begin{figure}[h]
\centering
\begin{tabular}{ccc}
\subfigure[The computed state $y^h$]{
	\includegraphics[width=0.25\textwidth]{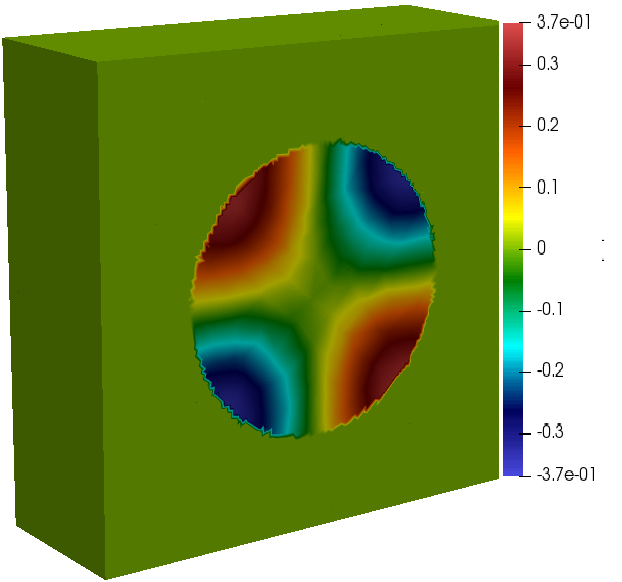}}\\ 
\subfigure[The computed adjoint $p^h$]{
	\includegraphics[width=0.25\textwidth]{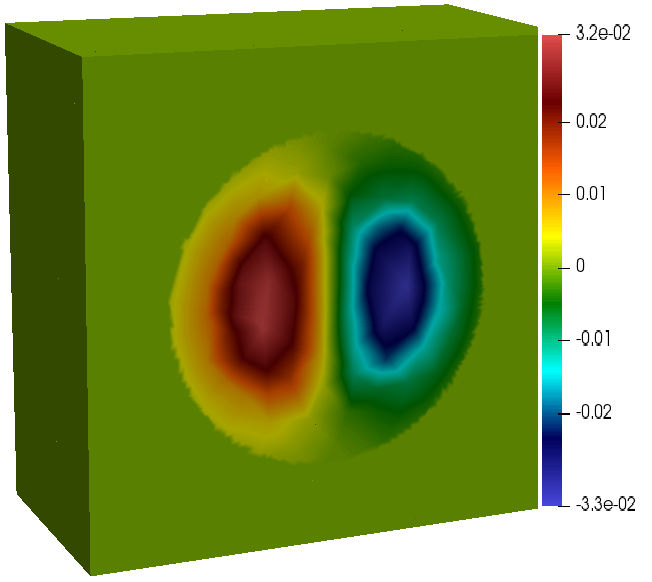}} \quad\quad
\subfigure[The computed control $u^h$]{
	\includegraphics[width=0.25\textwidth]{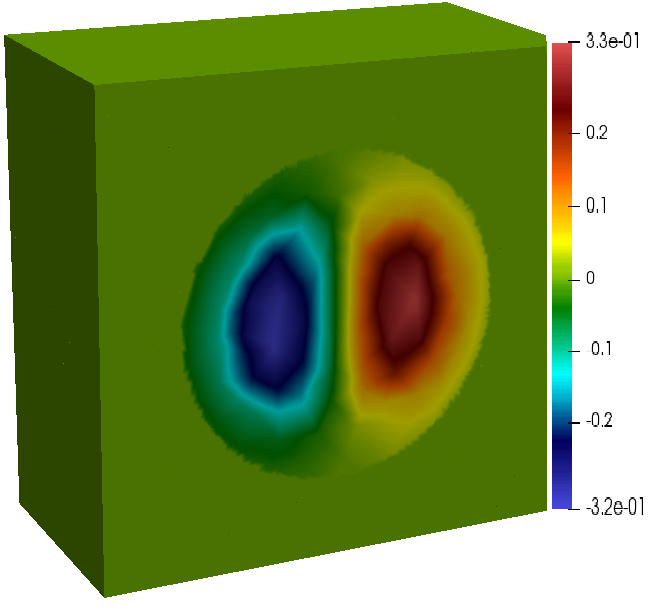}}
\end{tabular}
\caption{Test \ref{ex2}: The computed optimal solution triple for the three-dimensional control problem.}\label{image2}
\end{figure}
We present the current three-dimensional example aiming to compare the performance  and the condition numbers of the three different preconditioners in progressively refined computational meshes. Table \ref{table5} displays the number of CG iterations until the residual norm has reached the value  $10^{-8}$ and the condition numbers of the corresponding preconditioned matrices. Multigrid iterations consist of one V-cycle with one pre-smoothing and one post-smoothing steps of Gauss-Seidel smoother followed by a local interface correction. 

\begin{table}[ht]
\centering
\resizebox{12.5cm}{!}
{\medskip
\begin{tabular}{c|c|cc|cc|cc|cc}
	\toprule
	$\ell$ & dofs & $\kappa(K)$ & iter(CG) & $\kappa(K_J^{-1}K)$ & iter 
	& $\kappa(K_{SGS}^{-1}K)$ & iter & $\kappa(K_{MG}^{-1}K)$ & iter\\
	\hline\midrule
	1 &  912 &   2053.09 &  476 &  507.32 & 282 & 226.42 & 120 &  2.30 & 9 \\ 
	2 & 5616 &  15927.46 & 3174 &   873.10 & 727 & 713.92 & 278 &  5.86 & 15 \\ 
	3 &38624 & 273169.16 & 32407& 3698.96 &2849 &4414.50 & 976 & 19.88 & 25 \\ 
	\bottomrule
\end{tabular}}
\medskip
\caption{{Test \ref{ex2}:  Spectral condition numbers for the stiffness matrix $K$ and the corresponding preconditioned stiffness matrices, with Jacobi $K_J$ (fifth column), symmetrized Gauss-Seidel $K_{SGS}$ (seventh column) and multigrid V-cycle (ninth column)  for various levels $\ell$. CG iterations are  also provided which terminate after the residual norm has reached the value  $10^{-8}$.}}\label{table5}
\end{table}
\end{example}
\begin{example}\label{ex3}
This example serves to illustrate the behavior of the preconditioned matrix associated to the cut elements discretization in a more complicated and more realistic geometry e.g. of exhaust gaskets used in motorcycles or housing appliances.  The geometry is described by the level set 
\begin{equation}\label{complex_geometry}
\begin{split}
\phi((x_1,x_2),(\omega_1,\omega_2)) = (x_1^2+x_2^2 - 1) ((x_1-1.5)^2+x_2^2-0.02)((x_1+1.5)^2+x_2^2-0.02) \\
\cdot((4/9)x_1^2+0.0625x_2^2-(1/\omega_1)
-\omega_2\cos(\textrm{arctan}(5x_2/x_1))),
\end{split}
\end{equation}
with $(\omega_1,\omega_2)=(9, 2)$, prescribing the boundary of the red-colored area illustrated in Figure \ref{cutfig}, and with additional difficulty of no Lipschitz geometry onto some points . This geometry is embedded in the background domain $\tilde{\mathcal{D}}=[-3, 3]\times [-2.5, 2.5]$ and in order to make a visual comparison we also give an approximation of the optimal state, adjoint and control solutions in Figure \ref{complex_fig} regarding this more complex geometry.

The efficiency of the preconditioners in this example is conspicuous, see Table \ref{table4}. The large condition number of the initial stiffness matrix, in the  no preconditioning case, in a glance  shows  the ill-conditioning of the cut elements discretization system matrix,  and the necessity for preconditioning. Nevertheless, diagonal scaling does not give any significant improvement on the conditioning comparing with the symmetrized Gauss-Seidel or  with the multigrid preconditioner.

\begin{table}[htbp]
\centering
\resizebox{12.5cm}{!}
{\medskip
\begin{tabular}{c|c|cc|cc|cc|cc}
	\toprule
	$\ell$ & dofs & $\kappa(K)$ & iter(CG) & $\kappa(K_J^{-1}K)$ & iter 
	& $\kappa(K_{SGS}^{-1}K)$ & iter & $\kappa(K_{MG}^{-1}K)$ & iter \\
	\hline\midrule
	1 & 751  & 524.08 & 193 & 616.63 &  79 & 114.17 & 38 & 1.48 & 9  \\ 
	2 & 9637 & 963.63 & 227 & 510.70 & 179 & 106.81 & 97 & 2.30 & 12 \\ 
	3 & 36723&3645.44 & 493 &2274.43 & 351 & 412.72 &201 & 2.32 & 12 \\ 
	\bottomrule
\end{tabular}}
\medskip
\caption{{Test \ref{ex3}: Spectral condition numbers for the stiffness matrix $K$ and the corresponding preconditioned stiffness matrices, with Jacobi $K_J$ (fifth column), symmetrized Gauss-Seidel $K_{SGS}$ (seventh column) and multigrid V-cycle (ninth column)  for various levels $\ell$. CG iterations are  also provided which terminate after the residual norm has reached the value $10^{-8}$.}}\label{table4}
\end{table}
\begin{figure}[h]
\begin{center}
\begin{tabular}{cc}
	\subfigure[Computed state $y^h$]{
		\includegraphics[width=0.28\textwidth]{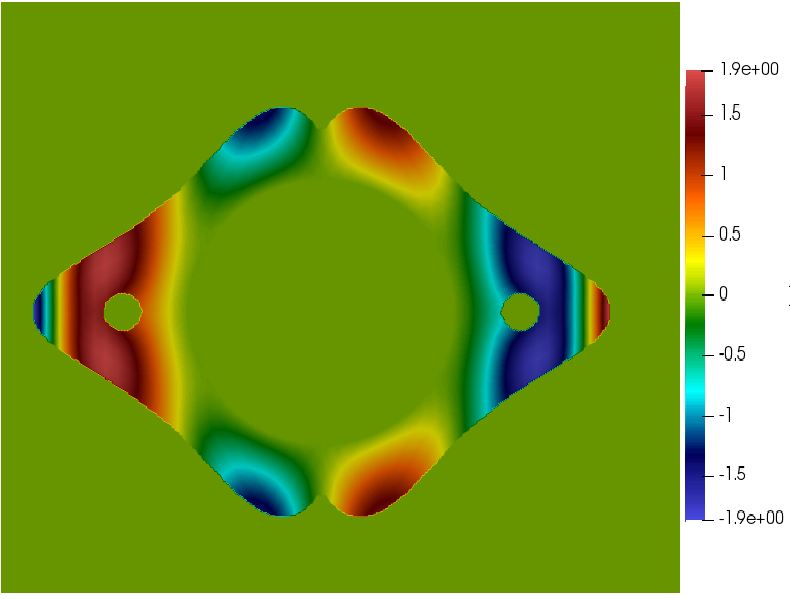}}\\ 
	\subfigure[Computed adjoint $p^h$]{
		\includegraphics[width=0.28\textwidth]{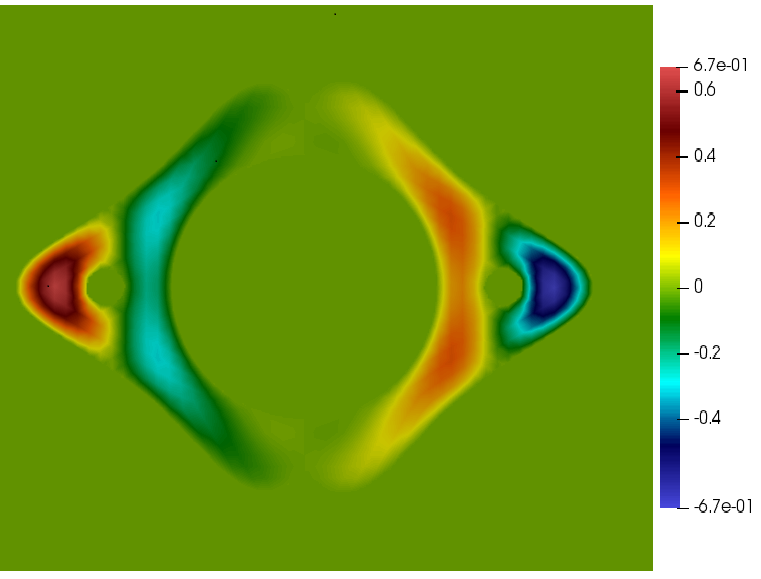}}\qquad
	\subfigure[Computed control $u^h$]{
		\includegraphics[width=0.28\textwidth]{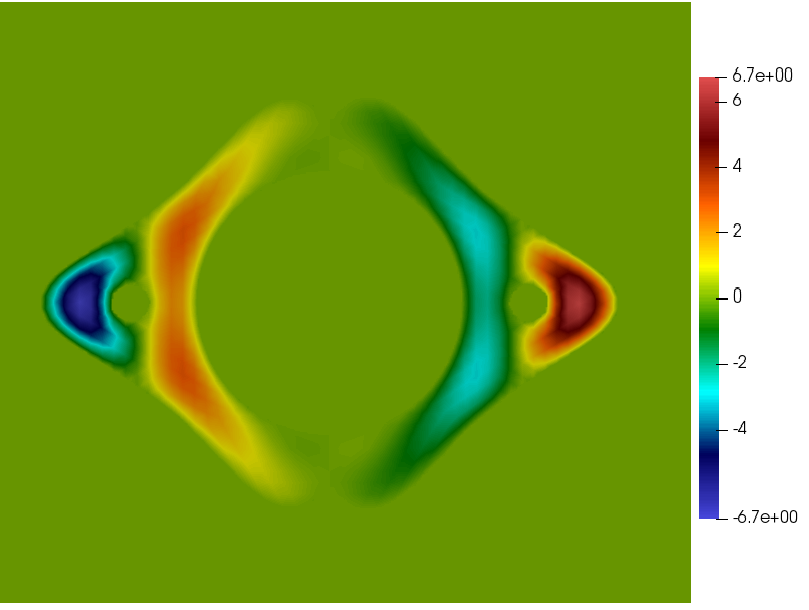}}
\end{tabular}
\caption{Test \ref{ex3}: The state, adjoint state and control variables approximation  with respect to a more complicated level set geometry.}\label{complex_fig}
\end{center}
\end{figure}
\end{example}
\subsection{Non-deterministic case}
Herein we exploit the sensitivity of the optimal control problem in the presence of random fluctuations in the domain field. Hence we examine the problem of finding the optimal solution pair of state $\bar{y}(\omega)\in H^1(\mathcal{D}(\omega))$ and control $\bar{u}(\omega)\in L^2(\mathcal{D}(\omega))$ such that 
\[
\min\mathcal{J}(y, u):=\frac{1}{2}
\|y(\omega_{i})-{y_d}\|_{0,\mathcal{D}(\omega_i)}^2 + \frac{\alpha}{2} \|u(\omega_{i})\|_{0,\mathcal{D}(\omega_i)}^2,
\]
subject to \,\,\,
$
\nonumber - \Delta y(\omega_i) = { f} + u(\omega_i) \,\,\,\textrm{in}\,\,\, \mathcal{D}(\omega_i), \,\,\,\textrm{and}\,\,\, 
y(\omega_i) = {g}  \,\,\,\textrm{on}\,\,\,\, \Gamma(\omega_i),
$
\\
for $i=1, \ldots, N$. As $\mathcal{D}(\omega)$ we take the parameterized geometry with boundary given by the level set \eqref{complex_geometry}.
For a representation of the geometrical deformations with respect to three samples see the uncovered areas illustrated at the first three pictures (from left to right) in Figure \ref{parametrization}. A comparison of all geometries together is visualized at the rightmost picture.

We again discretize the spatial domain $\mathcal{D}=[-3,3]\times [-2.5,2.5]$ by the cut finite elements method,  for mesh size $h = 0.075$, and a QMC quadrature rule to approximate the statistics of our quantities of interest over the parameter space $\Omega = [9,12] \times [2,3]$. As a first test, in order to investigate the efficiency of a QMC method, we implement it in its pure deterministic form by taking $N = 2^{m}$, $m=1, \ldots, 15$, two-dimensional lattice rules with generating vector $z=(1,127)$. For an illustration of a $2^{8}$-point rank-1 lattice rule see Figure \ref{scatterplot}(b).

To minimize the discrete cost functional, we use the lattice rules to sample the points in the parameter domain $\Omega$ and then we take them as inputs for the state and adjoint variational forms to solve the resulting systems. We execute Algorithm \ref{alg:seq1}, starting  with $u_h^{(0)}(\omega) = 1$ as the initial guess of the control variable, and we apply CG with a two-grid preconditioner. Then we ensemble the average and variance of our output quantities of interest, regarded to be the difference between the optimal state $\bar{y}_h(\omega)$ and the target state ${ y_d}$, the optimal state $\bar{y}_h(\omega)$, the optimal control $\bar{u}_h(\omega)$, and the cost functional $J_h(\bar{y}_h,\bar{u}_h)$. A visualization of the approximate optimal state $\bar{y}_h(\omega)$ for a single realization $\omega=(12, 3)$ is shown in Figure \ref{random_solution}.

\begin{figure}[htbp]
\centering
\includegraphics[width=0.44\textwidth]{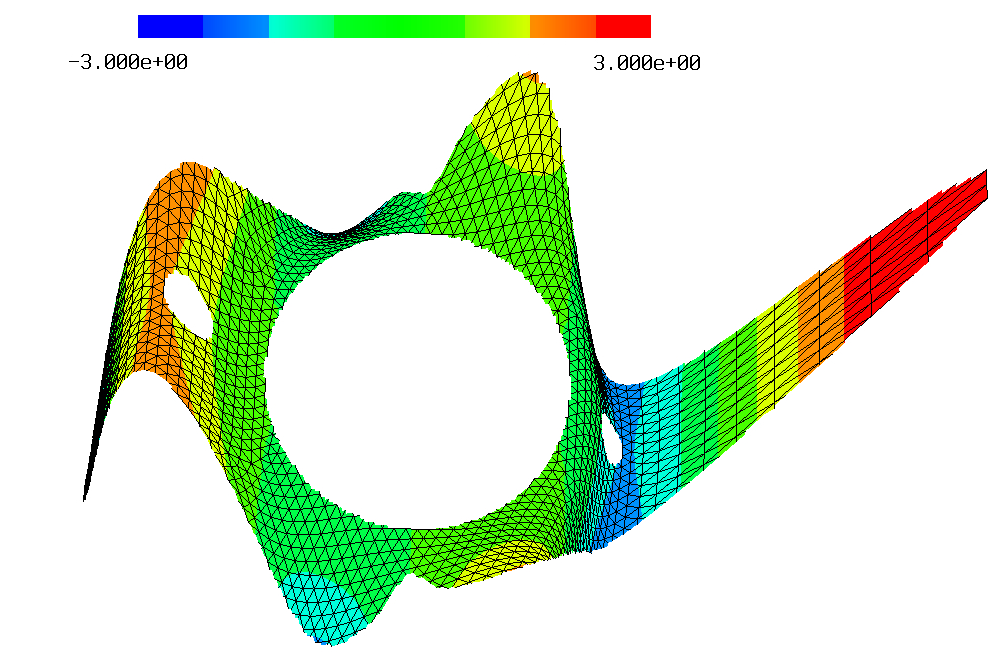}
\caption{Visualization of the computed optimal state $\bar{y}_h(\omega)$ in the parametrized geometry prescribed by \eqref{complex_geometry} for a single realization $\omega=(\omega_1,\omega_2)=(12,3)$.}\label{random_solution}
\end{figure}

The corresponding log-log plots of the errors are demonstrated in Figure \ref{QMC_rates}. The application of the Quasi Monte Carlo simulation in our model problem provides evi\-dence of its fast convergence properties achieving a rate of $\mathcal{O}(N^{-1})$ compared to Monte Carlo's rate of $\mathcal{O}(N^{-1/2})$.

\begin{figure}[htbp]
\centering
\includegraphics[width=0.73\textwidth]{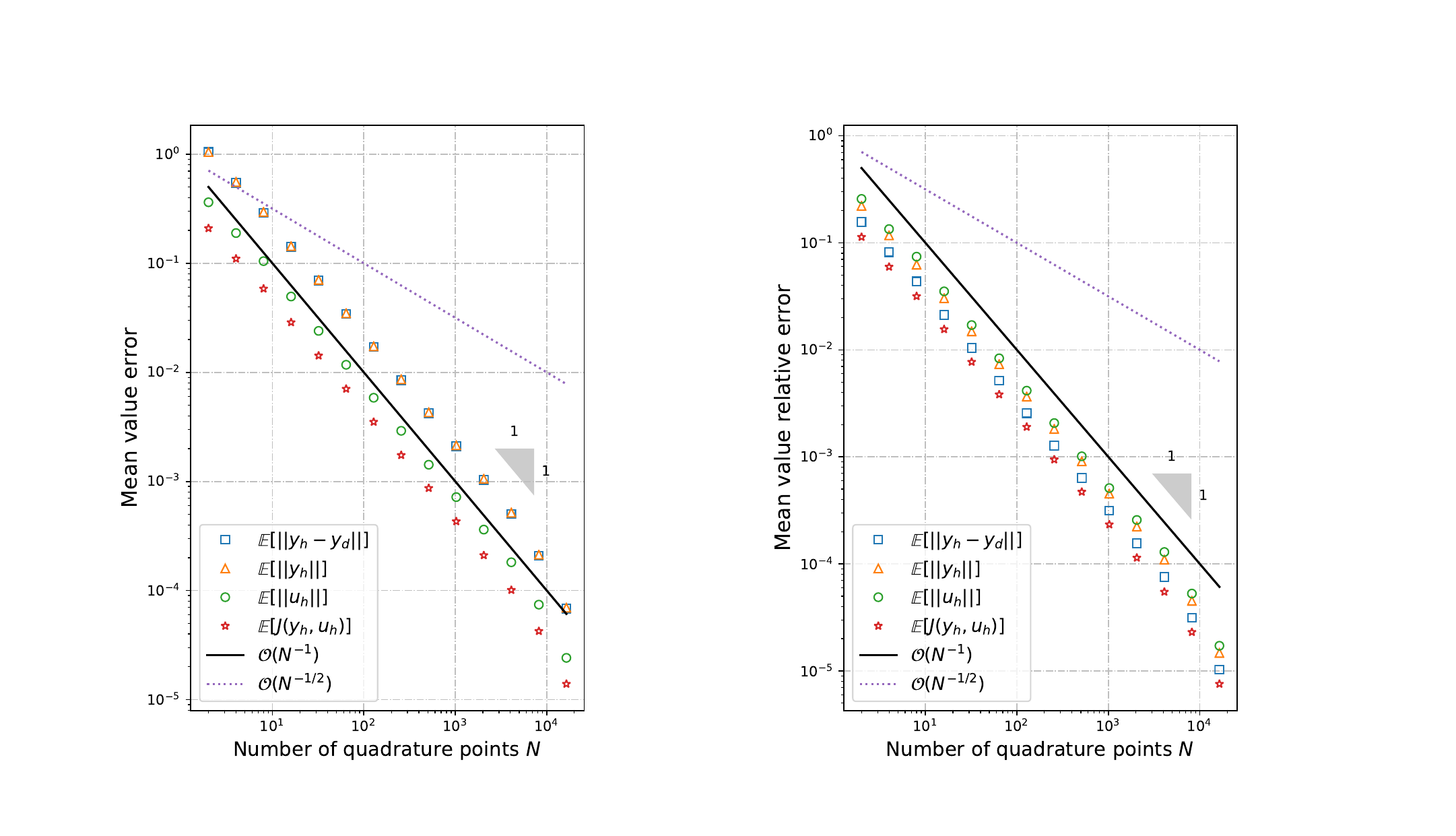}\\
\quad
\includegraphics[width=0.74\textwidth]{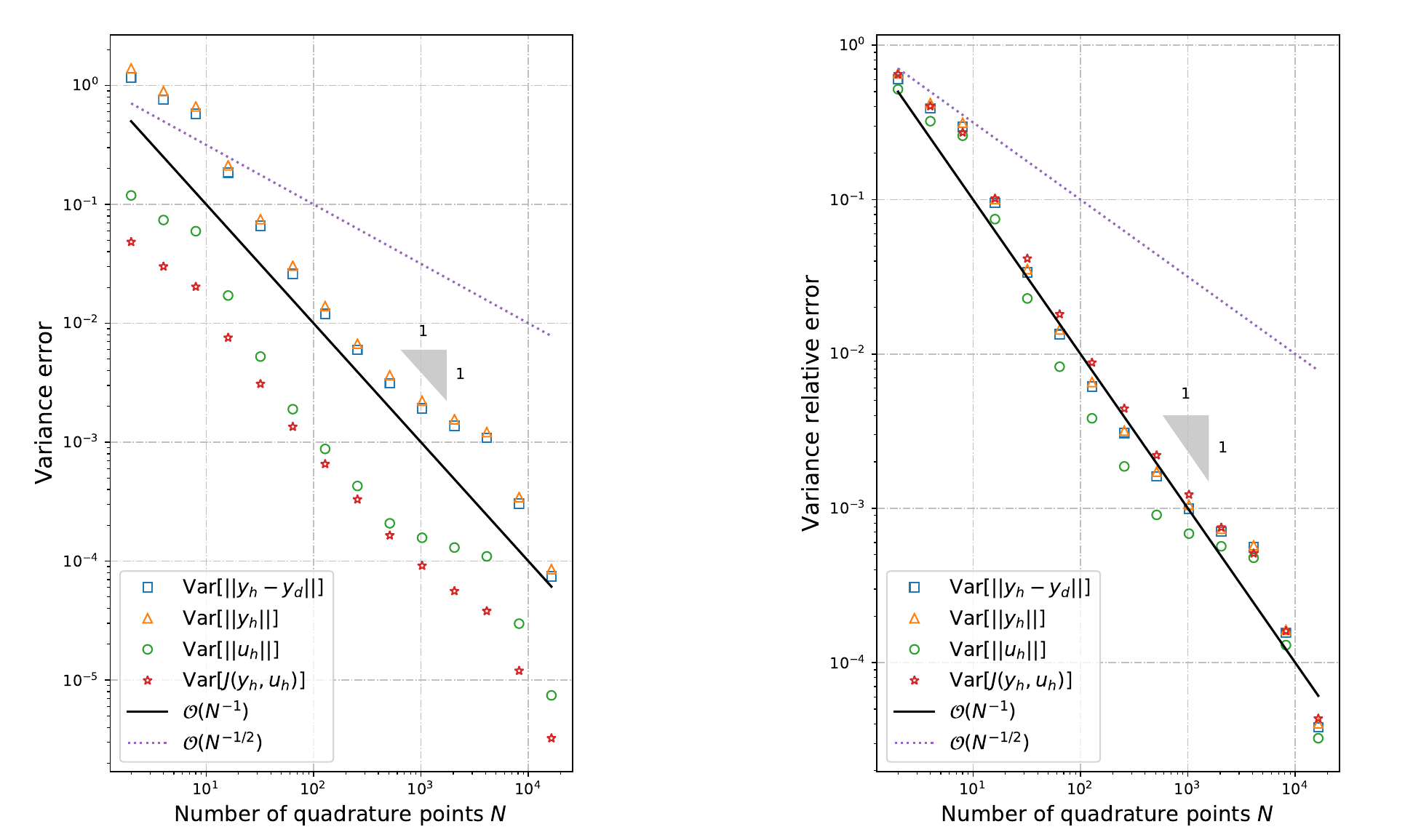}
\caption{Quasi Monte Carlo simulations with $N=2^{15}$ deterministic lattice points. The top pictures show the convergence rate of the standard error (left) and relative error (right) for the approximation of the integrals $\mathbb{E}[\|\bar{y}_h-y_d\|]$ (blue squares), $\mathbb{E}[\|\bar{y}_h\|]$ (orange triangles), $\mathbb{E}[\|\bar{u}_h\|]$ (green bullets) and $\mathbb{E}[J(\bar{y}_h, \bar{u}_h)]$ (red stars), while the bottom pictures refer to the errors of $\mathrm{Var}[\|\bar{y}_h-y_d\|]$ (blue squares), $\mathrm{Var}[\|\bar{y}_h\|]$ (orange triangles), $\mathrm{Var}[\|\bar{u}_h\|]$ (green bullets), $\mathrm{Var}[J(\bar{y}_h, \bar{u}_h)]$ (red stars). For reference we also plot Monte Carlo order of convergence (dotted line).}\label{QMC_rates}
\end{figure}

We continue our analysis by investigating the sensitivity of our quantities of interest with respect to the parameter domain $\Omega=[9,9.25]\times[2,2.25]$. In this effort, we focus on a randomized QMC form constructing randomly shifted lattice rules, as described in section \ref{section5} (see also Figure \ref{scatterplot}) and we obtain a practical estimate of the error by taking formula (\ref{variance_shifted}). To avoid spoiling the good QMC convergence rate achieved previously, we select the number of random shifts $\mathbf{\Delta}_{j}$, $j=1, \ldots, q$ to be small and fixed, say $q=16$. Then we increase successively the number of points until the desired error threshold of $10^{-5}$ is satisfied. The efficiency of this randomized version of QMC method is demonstrated in Figure \ref{QMC_rates_random_shift} for an increasing number of quadrature points until $qN=2^{16}$.
\begin{figure}[htbp]
\centering
\includegraphics[width=0.46\textwidth]{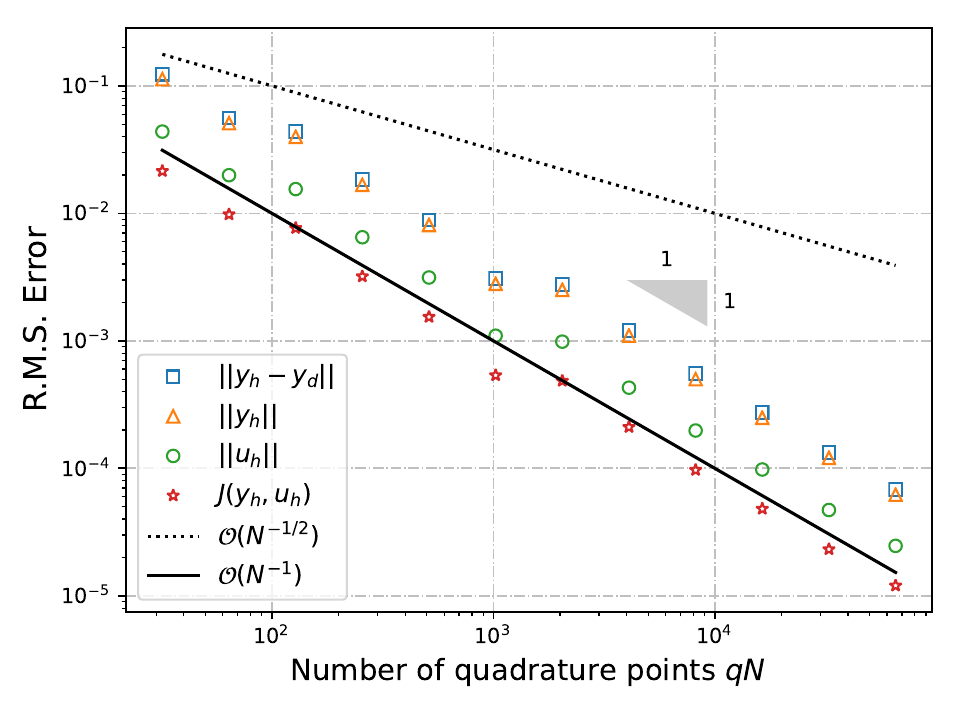}
\caption{Root-Mean-Square error for the quantities of interest $\|\bar{y}_h-y_d\|$ (blue squares), $\|\bar{y}_h\|$ (orange triangles), $\|\bar{u}_h\|$ (green bullets) and $J(\bar{y}_h, \bar{u}_h)$ (red stars). The pictures show the expected convergence rate of randomized Quasi Monte Carlo simulation with $qN=2^{16}$ randomly shifted lattice points. For reference we also plot Monte Carlo order of convergence (dotted line).}\label{QMC_rates_random_shift}
\end{figure}
\begin{remark}
We notice that in order to compute the mean value and variance errors and relative errors in the non-deterministic case, as ``true" values we use the mean and variance values of our quantities of interest as they are calculated at the highest level.
\end{remark}
\section{Conclusion}
We have considered an elliptic optimal control problem with uncertainties in the spatial domains. We discretized by a cut finite element method utilizing its value considering geometrical changes,  and in the probability domains we approximated  solutions with a randomized Quasi Monte Carlo method. We have also applied three types of basic preconditioners: Jacobi, symmetrized Gauss-Seidel and multigrid on a Conjugate Gradient iterative method in order to solve the linear systems arising from the discrete state and adjoint state variational forms. A comparison of the behavior of these preconditioners has been carried out on a set of two dimensional problems, firstly considering a domain which preserves the $H^2$--regularity property, then on a geometry with singularities (cusp points) and {finally}, a three-dimensional problem has been also considered. 

In the case of multigrid scheme the hierarchy of triangulations is easily obtained by taking the associated extended computational domains, with canonical prolongation--restriction operators and smoothing iterations in the domain and on the boundary zone. As expected, multigrid iteration as a preconditioner inside CG  method  leads  to  a  more  robust  conditioning of the stiffness matrix compared to the other two preconditioners, with a convergence factor independent of the level of the triangulation.

In addition, we have implemented Quasi Monte Carlo both in its deterministic and randomized forms concerning the sensitivity of the optimal control problem with respect to the uncertain parameters. Its effectiveness in variance reduction has been confirmed by the practical estimate of the Root--Mean--Square error. 

Finally,  in a future work, and based on  the good performance of the aforementioned techniques in optimal control with partial differential equations as constraints and unfitted mesh finite element methods, we are planning  to extend and to deal with preconditioning in optimal control constrained by time depended  systems, nonlinear partial differential equations, fluid flow phase systems, Stokes and Navier--Stokes flow approximations and other Quasi Monte Carlo variants for even better performance.

\section*{Acknowledgments}
This project has received funding from the Hellenic Foundation for Research and Innovation (HFRI) and  the  General  Secretariat  for  Research  and  Technology (GSRT),  
under  grant agreement No[1115] (PI: E. Karatzas). This work was supported by computational time granted from the National Infrastructures for Research and Technology S.A. (GRNET S.A.) in the National HPC facility - ARIS - under project ID pa190902 and the ``First Call for H.F.R.I. Research Projects to support Faculty members and Researchers and the procurement of high-cost research equipment'' grant 3270.


\end{document}